\edef\Gin@extensions{\Gin@extensions,.mps}
\DeclareMathAlphabet{\mathpzc}{OT1}{pzc}{m}{it}
\newtheorem{theorem}{Theorem}
\newtheorem{lemma}{Lemma}[section]
\newtheorem*{claim*}{Claim}
\newtheorem*{theorem*}{Theorem}
\newtheorem*{corollary*}{Corollary}
\theoremstyle{definition}
\newtheorem{definition}{Definition}[section]
\newtheorem{example}{Example}[section]
\newtheorem{notation}{Notation}[section]
\theoremstyle{remark}
\newtheorem{remark}[theorem]{Remark}
\definecolor{DarkBlue}{rgb}{0,0.1,0.55}
\numberwithin{equation}{section}
\newcommand {\hide}[1]{}
\newcommand {\junk}[1]{}
\newcommand {\R} {\mathrm{R}}
\newcommand {\Sphere}{\mbox{${\bf S}$}}     
\newcommand {\Z}  {\mathbb{Z}}
\newcommand {\Q}         {\mathbb{Q}}
\newcommand {\eps} {{\varepsilon}}
\newcommand{\card}{\mathrm{card}}
\def\addots{\mathinner{\mkern1mu
		\raise1pt\vbox{\kern7pt\hbox{.}}
		\mkern2mu\raise4pt\hbox{.}\mkern2mu
		\raise7pt\hbox{.}\mkern1mu}}
\newcommand{\HH}  {\mbox{\rm H}}
\newcommand{\db}{\textsf{barcode-dist}}
\newcommand{\ddata}{\textsf{data-dist}}
\algnewcommand\algorithmicinput{\textbf{Input:}}
\algnewcommand\INPUT{\item[\algorithmicinput]}
\algnewcommand\algorithmicoutput{\textbf{Output:}}
\algnewcommand\OUTPUT{\item[\algorithmicoutput]}
\algnewcommand\algorithmicproc{\textbf{Procedure:}}
\algnewcommand\PROCEDURE{\item[\algorithmicproc]}
\algnewcommand\algorithmiccomplexity{\textbf{Complexity:}}
\algnewcommand\COMPLEXITY{\item[\algorithmiccomplexity]}
\newcolumntype{P}[1]{>{\centering\arraybackslash}p{#1}}
\newcommand{\Min}{\mathrm{Min}}
\newcommand{\barcode}{\mathrm{barcode}}
\renewcommand\p@enumii{}
\title{Sequents, barcodes, and homology}
\author{Saugata Basu, Negin Karisani, Laxmi Parida}
\begin{document}
\begin{abstract}
    We consider the problem of generating hypothesis from data based on ideas from logic. We introduce a notion of barcodes, which we call sequent barcodes, that mirrors the barcodes in persistent homology theory in topological data analysis. We prove a theoretical result on the stability of these barcodes in analogy with similar results in persistent homology theory. Additionally we show that our new notion of barcodes can be interpreted in terms of a persistent homology of a particular filtration of topological spaces induced by the data. Finally, we discuss a concrete application of the sequent barcodes in a discovery problem arising from the area of cancer genomics.
\end{abstract}	
\maketitle
	
	\section{Introduction}
	We introduce  a new method for generating causal hypotheses from
	data. Our method draws inspiration from classical logic (sequent calculus) on one hand and the relatively new discipline of topological data analysis (or more precisely the theory of persistent homology) on the other. Our data consists of a finite number of labeled sets of two types: 
	\begin{enumerate}[(a)]
	    \item 
	    the causal ones  which we label by $A, B, C, \ldots$ using letters from the beginning of the alphabet (these could for example represent genotypes of cell lines);
	    \item
	    the  effect ones which we label by $X,Y,Z, \ldots$ using letters from the end of the alphabet (these could for example represent phenotypes of cell lines). 
	\end{enumerate} 
	The data sets (both causal and effect) are all assumed to be subsets of some universal set (for example the set of available cell lines).
	
	The goal is to identify certain set hypotheses of the form 
	\[
	A \wedge B \wedge C \cdots \Longrightarrow X \vee Y \vee Z \cdots
	\]
	which are strongly suggested by the data.
	In the paper we will refer to these hypotheses as ``sequents'' in keeping with the terminology of Gentzen's sequent calculus (see for example \cite{Takeuti}).
	
	One basic problem that needs to be addressed when using mathematical techniques on
	data coming from applications is the issue of noise. 
	There are many techniques that are being used to filter noise from signal in data. However, one technique that is closest in spirit (and indeed a motivates our method) is the technique of persistent homology which is the bedrock of the field of topological data analysis.

	Persistent homology 
	has emerged as an important tool in data analysis for separating noise from signal in topologically minded applications \cite{Dey-Wang2021}.
	We will define precisely persistent homology later in the paper (see Section~\ref{sec:persistent}). For the purposes of this 
	introduction -- persistent homology theory associates to point cloud data coming from some metric space a ``barcode'' (a finite set of intervals parameterized by a parameter often identified with time). The bars which are long (or ``persistent'' in time) correspond to signals while those which are short correspond to noise.
	
	In the theory we develop in this paper using sequents we associate a similar barcode to the given data (though defined in a somewhat different way from barcodes in persistent homology theory). Each bar in the barcode will represent a sequent 
	and we postulate that the long bars correspond to sequents
	which are more significant -- based on the philosophy that these sequents are the \emph{strongest conclusions lasting the longest} amongst all possible sequents according to the data.   
	We interpret these sequents (strongest last longer)  corresponding to long bars as the signal in the data (in analogy with persistent homology applications).

A key property satisfied by the barcodes coming from persistent homology is that of stability.
Stability theorems \cite{Cohen-Steiner-et-al-2007, Cohen-Steiner-et-al-2009, Cohen-Steiner-et-al-2010} state that the persistent homology or its associated barcode
is stable under perturbations of the input data. This makes persistent homology useful in applications, where the data often comes from physical measurements with their attendant sources of error.
 There are many variations of stability results in the literature. 
We refer the reader to \cite[Chapter VIII]{Edelsbrunner-Harer} and 
\cite[\S 5.6]{Chazal-et-al} for a survey of these results.
We prove an analogous stability result (Theorem~\ref{thm:stability}) 
for sequence barcodes (under certain restrictions on the data) that mirror the corresponding results 
for persistent homology barcodes.

The sequent barcodes that we define in this paper originate in a filtration of the poset of 
all possible sequents induced by the data. Posets and simplicial complexes are very closely related. Every simplicial complex has an associated poset (namely the face poset), and every poset has an
associated simplicial complex (whose simplices correspond to the chains of the poset). Filtrations of a poset induce in an obvious manner a filtration of the associated simplicial complex. Thus it is natural to speculate whether the sequent barcode can be interpreted as the persistent homology barcode of the associated simplicial
filtration. We show that this is not the case (at least in the straight-forward way as described above).
However, we are able to establish  a mathematical connection between the two notions of barcodes. 
We prove (see Theorem~\ref{thm:persistent} below) 
that the sequent barcodes that we introduce in this paper 
are \emph{embedded}  in the persistent barcodes of a certain filtration of a topological spaces
that is naturally associated to the data. In this way we reconnect our sequent barcodes to the persistent barcodes of topological data analysis completing the circle.
    
Finally, we apply the method of sequent barcodes to certain datasets coming
    from an application in cancer genomics and show that the 
    hypotheses predicted by the long bars in the sequent barcodes
    matches to a great extent those that are produced by more standard methods.
    
    The rest of the paper is organized as follows. In Section~\ref{sec:sequents} we define 
    sequent barcodes, discuss some basic examples and state and prove a stability result giving theoretical justification for their use in generating hypotheses. In Section~\ref{sec:topology}, we give the necessary background on persistent homology and establish the connection between our sequent barcodes and the barcodes of persistent homology theory. Finally in Section~\ref{sec:application} we discuss an application of the ideas introduced in this paper to a problem in cancer genomics. 
    In the Appendix we give precise definitions of persistent homology and barcodes in order to make the paper self-contained.
	
	\section{Sequents and partial order on them}
	\label{sec:sequents}  
	We assume two finite sets of propositional variables 
	\[
	V_A = \{a, b, c, \ldots \},
	\]
	and 
	\[
	V_S = \{x,y,z,\ldots\}.
	\]
	
	A \emph{sequent} $\Gamma \vdash \Delta$ is specified by a pair
	$(\Gamma,\Delta)$ where $\Gamma \subset V_A$, $\Delta \subset V_S$.
	We denote the set of all sequents by $\mathcal{S} = \mathcal{S}(V_A,V_S)$.
	
	\begin{definition}[Partial order on $\mathcal{S}$]
	\label{def:partial-order-on-sequents}
		If $s = (\Gamma \vdash \Delta), s' = (\Gamma' \vdash \Delta')$ are two sequents we will 
		denote $s \preceq s'$ if $\Gamma \subset \Gamma'$ and $\Delta \subset \Delta'$. 
	\end{definition}
	
	\begin{remark}
	\label{rem:partial-order-on-sequents:1}
	We identify a sequent $s = (\Gamma \vdash \Delta)$ with the element
	\[
	\bigvee_{a \in \Gamma} \neg a \vee \bigvee_{x\in \Delta} x
	\]
	in the free Boolean algebra $\mathcal{B}(V)$ generated by $V =V_A \cup V_S$.
	\end{remark}
	
	\begin{remark}
	\label{rem:partial-order-on-sequents:2}
		We should think of the partial order as saying that $s \preceq s'$, if
		from the sequent $s$ alone it is possible to deduce $s'$ (in sequent calculus terminology this means being able to deduce $s'$ from $s$ by successive applications of the (left and right) weakening and interchange rules). 
	\end{remark}
	
	We will define (depending on the data maps to be introduced below) two kinds of  
	\emph{filtrations}  of the sequent poset $\mathcal{S}$ introduced above. To each such filtration
	we will associate a barcode. In order to define these barcodes we need a few preliminary definitions.
	
	\begin{definition}[Minimal elements of a poset]
	\label{def:min-poset}
		Given any poset $(P,\preceq)$ we denote by $\Min(P)$ the set of minimal elements of $P$.
	\end{definition}
	
	\begin{definition}[Filtrations of posets]
	\label{def:filtration-of-poset}
		We call an indexed family 
		\[
		\mathcal{F} = (P_t)_{t \in [0,1]}
		\]
		of posets to be a filtration of posets if for each 
		$t,t', 0 \leq t \leq t' \leq 1$,
		$P_t \subset P_{t'}$.
	\end{definition}
	
	\begin{definition}[Barcodes of filtrations of posets]
	\label{def:barcode}
		Let $\mathcal{F} = (P_t)_{t \in [0,1]}$ be a filtration of posets.
		For $p \in P = P_1$, we denote by 
		\[
		I_p(\mathcal{F}) = \{t \in [0,1] \mid p \in \Min(P_t) \}.
		\]
		
		We denote 
		\[
		\barcode(\mathcal{F}) = \{(p, I_p(\mathcal{F})) \mid p \in P, I_p \neq \emptyset\},
		\]
		and call it the \emph{barcode of the filtration $\mathcal{F}$}.
	\end{definition}
	
	\begin{remark}
	\label{rem:barcode}
	    Note that in Definition~\ref{def:barcode}, 
	    for $p \in P$, if $I_p$ is non-empty then it is
	    equal to a half-closed interval $[s,t)$. In this case we will refer to
	    $s$ as the ``birth-time'' and $t$ as the ``death time'' of $p$ in the filtration.
	\end{remark}
	
	
	\hide{
		We recall some facts about Boolean algebras.
		
		\begin{definition}[Boolean algebra, Free Boolean Algebra, Ideals, quotients]
		\end{definition}
		
		Let $\mathcal{B} = \mathcal{B}(V)$ be the free Boolean algebra on $V$. Let $\mathcal{S}'$
		be a set of sequents considered as a subset of $\mathcal{B}$, and 
		$I_{\mathcal{S}'}$ be the ideal generated by 
		$\neg \mathcal{S}' = \{\neg s \mid s \in \mathcal{S}'\}$.
	}

\subsection{Data maps}
Till now the discussion of the sequent poset above had nothing to do with 
data. We now introduce certain maps which we call 
data maps, which connects the sequent posets with the data.
This will also allow us to define certain filtrations on the sequent poset
(depending on the data maps).
For this purpose, we first fix a measure space $(X,m)$, with $m(X) = 1$, and let
	$\mathfrak{M}$ denote the set of measurable subsets of $X$. 
	For instance $X$ could be a finite set with the counting measure
	normalized so that $m(X) = 1$. In this case $\mathfrak{M} = 2^X$. 
	
\begin{definition}[Data maps]
	We will call a map 
	$$
	\displaylines{
		[[\cdot]]:V_A \cup V_S \rightarrow \mathfrak{M}, 
	}
	$$
	that maps each element of $V_A,V_S$ to a measurable subset of $\mathfrak{M}$ 
	a \emph{data map}.
	\end{definition}

A data map allows to define filtrations of the sequent poset and thus associate
barcodes.
		
	\subsection{Filtration of the poset of sequents and their barcodes}
	
	\begin{definition}[Filtration in sequent space]
		A filtration  
		of the poset of sequents is a filtration of posets
		$\mathcal{F} = (\mathcal{S}_t)_{t \in [0,1]}$ 
		where each $\mathcal{S}_t$ is a subposet of the poset $\mathcal{S}$.
	\end{definition}

	We introduce two different filtrations on the poset of sequents. 
	Depending on the application it might be preferable to choose one over the other -- 
	however, in this paper we do not discuss such choices.

	\begin{definition}[Fuzzy inclusions]
	\label{def:fuzzy}
		Let $S, T \in \mathfrak{M}$. 
		For $0 \leq t \leq 1$. We define the 
		predicate $S \subset_t T$ by 
		\[
		S \subset_t T \Longleftrightarrow \frac{m(S \cap T)}{m(S)} \geq t.
		\]
		Note that $\subset_t$ is not  necessarily transitive.
	\end{definition}
	
	\begin{remark}
	\label{rem:fuzzy}
	    Note that fuzzy set theory is a well-established field of research. In set theory there are 
	    two basic predicates -- membership and equality. In fuzzy set theory the membership relation is allowed to be fuzzy. In Definition~\ref{def:fuzzy} on the other hand we let the inclusion relationship (which generalizes equality) to be fuzzy. 
	    It is interesting to note that in 
	    \cite[Section 15.6]{Barr-Wells} the authors make the remark -- "In topos theory, both (membership and equality) may be fuzzy, but in fuzzy set theory, only membership is allowed to be". Our notion of fuzziness as defined above thus straddles the two worlds of fuzzy set theory and topos theory. 
	\end{remark}

	\begin{definition}[Filtration I]
		For $0 \leq t \leq 1$, and a data map 
		$[[\cdot]]:V_A \cup V_S \rightarrow \mathfrak{M}$,
		we define 
		\[
		\mathcal{S}'_t([[\cdot]]) = \left\{ (\Gamma \vdash \Delta) \in \mathcal{S} \mid
		\bigcap_{a \in \Gamma} [[a]] \subset_{1-t} \bigcup_{x \in \Delta} [[x]] \right\}.
		\]
		
		Note that 
		\[
		\mathcal{S}'_1([[\cdot]]) = \mathcal{S},
		\]
		and for $0 \leq t \leq t'\leq 1$
		\[
		\mathcal{S}'_{t}([[\cdot]]) \subset \mathcal{S}'_{t'}([[\cdot]]).
		\]
		
		We denote the filtration $(\mathcal{S}'_t([[\cdot]]))_{t \in [0,1]}$ by $\mathcal{F}'([[\cdot]])$.
	\end{definition}
	
	\begin{definition}[Filtration II] \label{def:filtrationII}
	For $0 \leq t \leq 1$, and a data map 
		$[[\cdot]]:V_A \cup V_S \rightarrow \mathfrak{M}$,
		we define 
		\[
		\mathcal{S}''_t([[\cdot]]) = \left\{ (\Gamma \vdash \Delta) \in \mathcal{S} \mid
		m\left(
		\bigcup_{a \in \Gamma} [[a]]^c \cup  \bigcup_{x \in \Delta} [[x]]
		\right) 
		\leq 1-t\right\},
		\]
		where for any $Y \in \mathfrak{M}$, $Y^c = X - Y \in \mathfrak{M}$.
		
		Note again that 
		\[
		\mathcal{S}''_1([[\cdot]]) = \mathcal{S},
		\]
		and for $0 \leq t \leq t'\leq 1$
		\[
		\mathcal{S}''_{t}([[\cdot]]) \subset \mathcal{S}''_{t'}([[\cdot]]).
		\]
		We denote the filtration $(\mathcal{S}''_t)_{t \in [0,1]}$ by $\mathcal{F}''([[\cdot]])$.
	\end{definition}
	
	The barcodes $\barcode(\mathcal{F}'([[\cdot]]))$, 
	$\barcode(\mathcal{F}''([[\cdot]]))$, 
	give important information
	about the implicational information contained  in the data 
	(i.e. in the data map $[[\cdot]]$). 
	
	\subsection{Example}
	We discuss an example with $V_A = \{a,b\}$, $V_S = \{x,y\}$,
	$X = \{1,2,3,4,5\}$ with the counting measure and $[[\cdot]]_A, 
	[[\cdot]]_S$ defined below.
	\[
	    [[a]]_A = \{1,2,3,4\}, \ \ \ \ \   [[b]]_A = \{2,3,4,5\}, \] \vspace{-2cm}
	 \[
	    [[x]]_S = \{1,2,3\}, \ \ \  \ \  \ \ \ \ \ \  [[y]]_S = \{3,4,5\}.
	\]
	The poset $\mathcal{S}(V)$ along with the ``birth times'' of each sequent
	in $\mathcal{S}(V)$ is shown in Figure~\ref{fig:poset-of-sequents}.
	\begin{figure}[h]
		\begin{center}
			\includegraphics[scale=0.85]{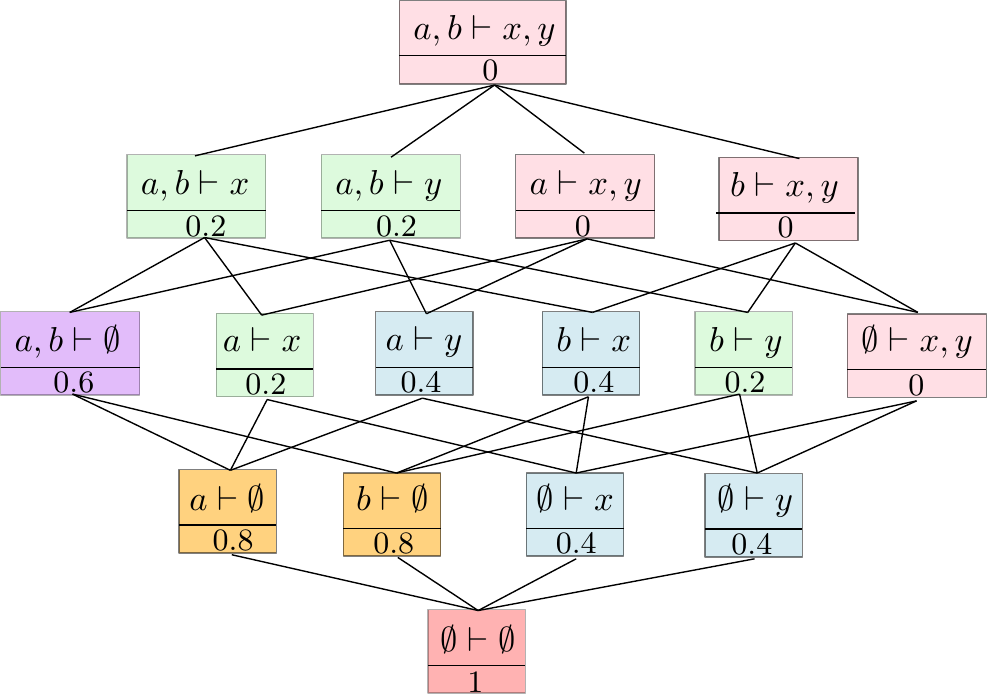}
			\caption{\small The poset $\mathcal{S}(V)$.}
			\label{fig:poset-of-sequents}
		\end{center}
	\end{figure}
	
	The sub-posets that appear in the filtration $\mathcal{F}''([[\cdot]])$ (Filtration II) induced by the above data with
	the minimal elements marked in black rectangles are shown in Figure~\ref{fig:filtration}.
	
	\begin{figure}[H]
		\centering
		\subfigure[ {\footnotesize $t=0$}]{
			\label{fig:poset:filtrationt0}
			\includegraphics[scale=0.7]{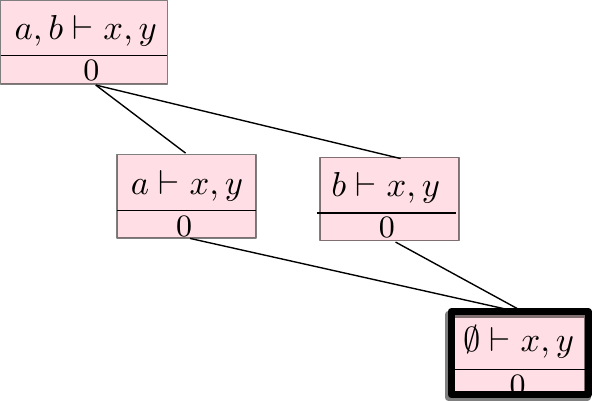}
		} 
		\hspace*{.2cm}
		\subfigure[{\footnotesize $t=0.2$}]{
			\label{fig:poset:filtrationt02}
			\includegraphics[scale=0.77]{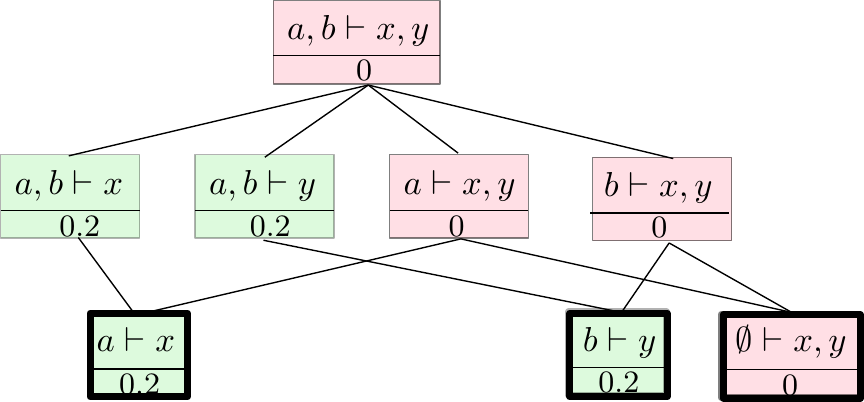} 
		}
		
		\subfigure[{\footnotesize $t=0.4$}]{
			\label{fig:filtrationt04}
			\includegraphics[scale=0.7]{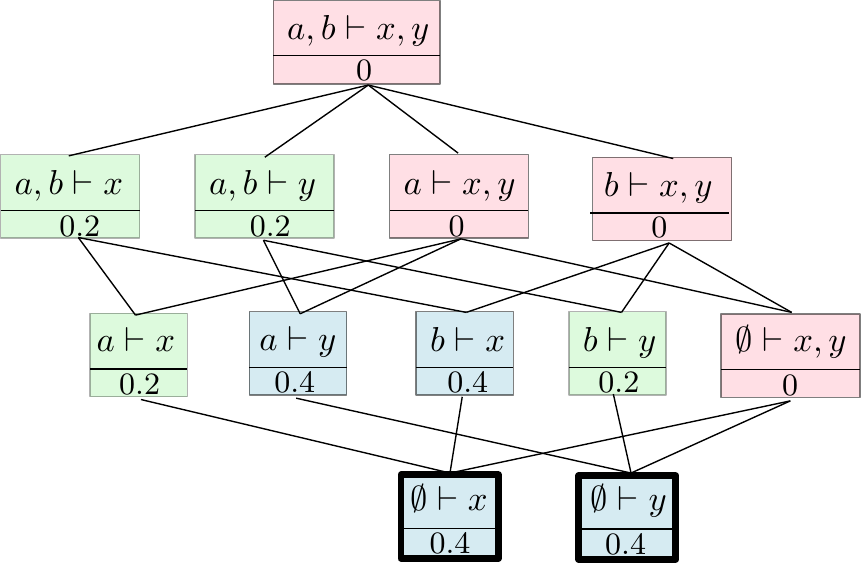} 
		}
		\subfigure[{\footnotesize $t=0.6$}]{
			\label{fig:filtrationt06}
			\includegraphics[scale=0.7]{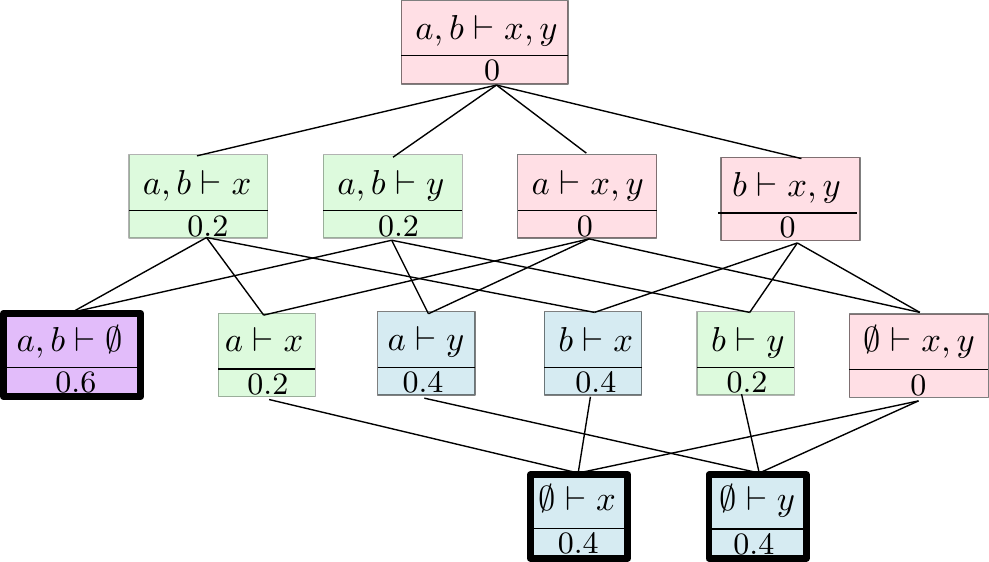} 
		}
		\subfigure[{\footnotesize $t=0.8$}]{
			\label{fig:filtrationt08}
			\includegraphics[scale=0.77]{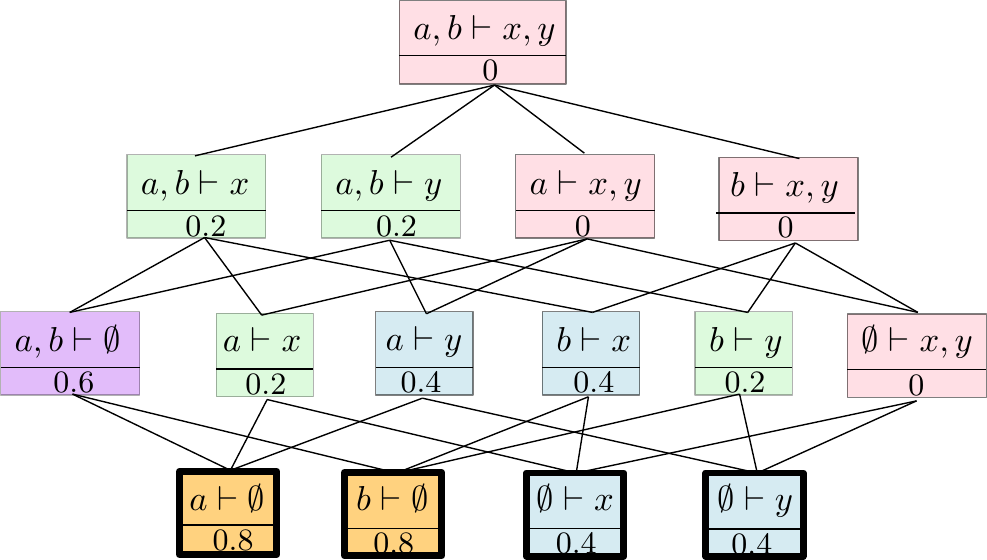} 
		}
		\subfigure[{\footnotesize $t=1$}]{
			\label{fig:filtrationt1}
			\includegraphics[scale=0.77]{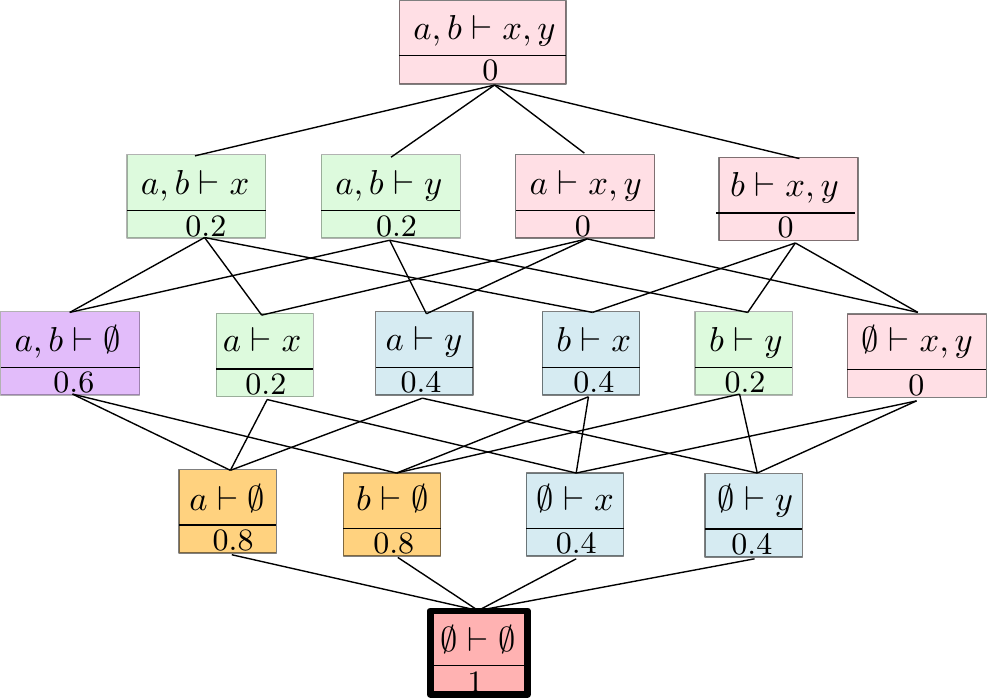} 
		}
		\vspace{-0.4cm}
		\caption{\small Sub-posets $\mathcal{S}''_t([[\cdot]])$ in the filtration when $t$ varies from 0 to 1.} \label{fig:filtration}
	\end{figure}

	%
	%
	%
	Finally, we show the $\barcode(\mathcal{F}''( [[\cdot]])$ in
	Figure~\ref{fig:barcode-example}. 
	\begin{figure}[H]
		\begin{center}
			\includegraphics[scale=0.6]{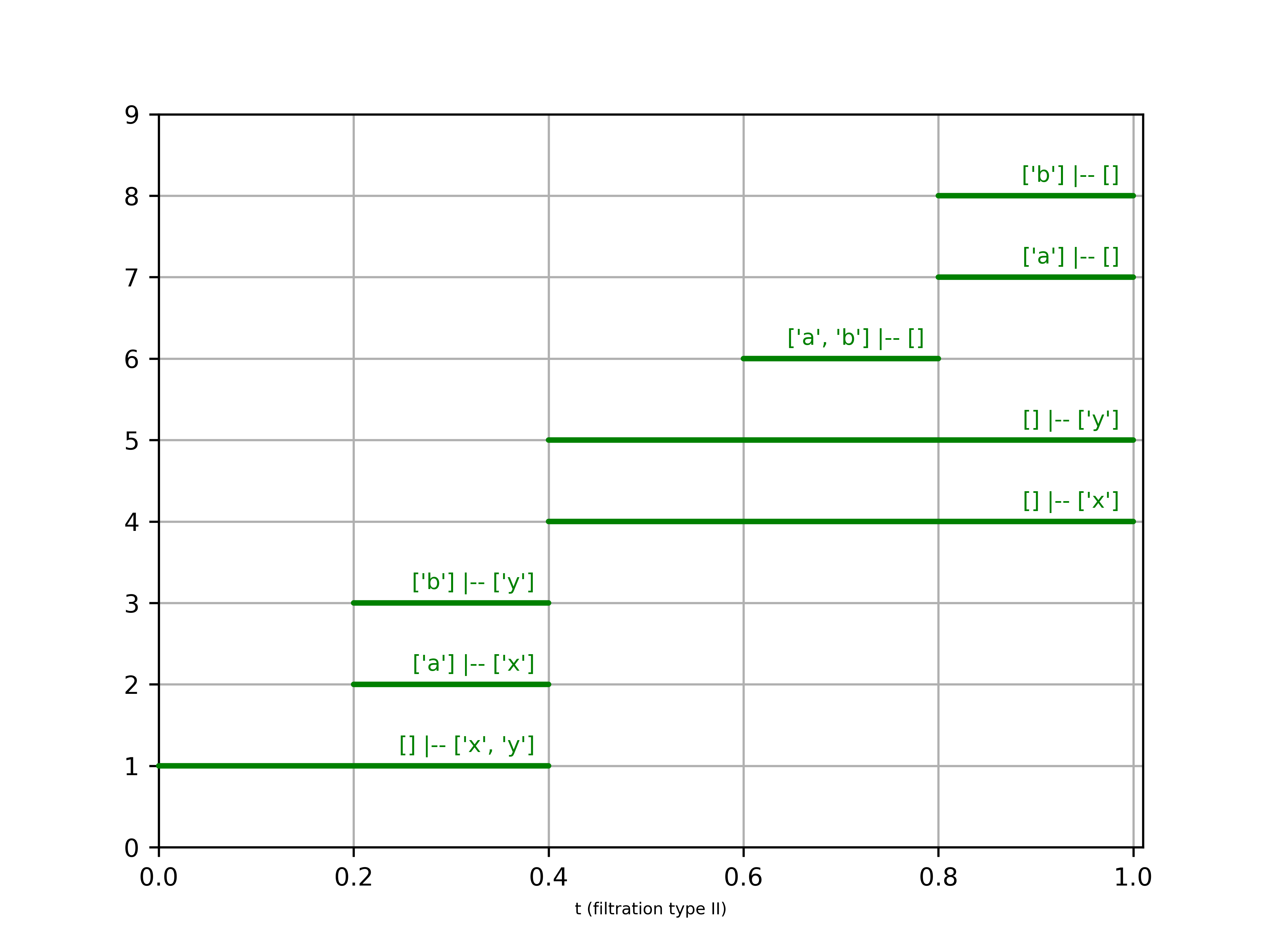}
			\caption{\small The $\barcode(\mathcal{F}''([[\cdot]]))$.}
			\label{fig:barcode-example}
		\end{center}
	\end{figure}

	\subsection{Stability of sequent barcodes}
	As mentioned in the Introduction,
	a very important property of barcodes arising in the theory of persistent homology is that of stability. The stability property implies that small changes in the data cases only small 
	changes in the barcode. This property is key to the utility of barcodes in practice. 
	In this section we discuss an analogous property for sequent barcodes.
	In the following discussion we only consider Filtration $I$ and omit $I$ from the subscript.
	
	The stability property for sequent barcodes should imply that 
	with a properly defined notion of distance between two sequent barcodes
	and that between two data maps, one can bound the former in terms of the latter. With this in mind, 
	we first define notion of distances between sequent two barcodes corresponding to two filtrations and also a distance between two data maps.
	
	\subsubsection{Distance between barcodes of two different filtrations of a fixed poset}
	\begin{definition}[Distance between sequence barcodes of two filtrations]
	\label{def:barcode-distance}
	 Suppose $\mathcal{F} = (P_t)_{t \in [0,1]},\mathcal{F}' = (P_t')_{t \in [0,1]}$ be two filtrations of a poset $P$. 
	We denote
	\[
	\db(\mathcal{F},\mathcal{F}') = \sum_{p \in P} |I_p(\mathcal{F}) \Delta I_p(\mathcal{F}')|,
	\]
	$\cdot \Delta \cdot$ denotes symmetric difference.   
	\end{definition}
	
	\begin{definition}[Distance between two data maps]
	\label{def:dist-data}
	 Given 
	$[[\cdot]]: V_A \cup V_S \rightarrow \mathfrak{M}, 
	[[\cdot]]': V_A \cup V_S \rightarrow \mathfrak{M},
	$
	we denote
	\[
	\ddata([[\cdot]],[[\cdot]]') = \sum_{v \in V_A\cup V_S} m([[v]] \Delta [[v]]')  
	.
	\]  
	\end{definition}
	
	\subsubsection{Assumptions}
	We will prove our stability result under certain assumptions on the 
	data maps which will be usually satisfied in applications. 
	
	The first assumption on the data map is the property of being $\eps$-granular.
	
	\begin{definition}[$\eps$-granular]
	 For $0 <\eps \leq 1$, we will say that a data map: $[[\cdot]]:V_a \cup V_S \rightarrow \mathfrak{M}$
	 is $\eps$-granular if
		for each $a \in V_A, x \in V_S$, $m([[a]]),m([[x]]) > \eps$. 
	\end{definition}

The second assumption that we need is to restrict our attention to those
sequents $\Gamma \vdash \Delta$, whose antecedent $\Gamma$ is sufficiently
significant given the data. This is quantified in the following definition.

\begin{definition}
 For $0 \leq \delta \leq 1$, 
	and a given map: $[[\cdot]]:V_A \cup V_S \rightarrow \mathfrak{M}$,
	we denote by
	$\mathcal{F}_{\delta}([[\cdot]])$ 
	the sub-filtration of $\mathcal{F}'([[\cdot]])$ in which only sequents
	$s = (\Gamma \vdash \Delta)$, with
	\[
	m(\bigcap_{b \in \Gamma}[[b]])/m([[a]]) > \delta
	\]
	appear for each $a \in \Gamma$.   
\end{definition}

We are now in a position to state and prove the following stability result.

	\begin{theorem}[Stability]
		\label{thm:stability}
		Let $0 < \eps,\delta \leq 1$.
		There exists a constant $C > 0$ (depending on $\card(V_A), \card(V_S), \eps, \delta$) such that for any two $\eps$-granular data maps,
		$[[\cdot]],[[\cdot]]': V_A \cup V_S \rightarrow \mathfrak{M},
		$
		such that the set of sequents appearing in $\mathcal{F}_\delta([[\cdot]])$ and 
		$\mathcal{F}_\delta([[\cdot]]')$ are the same,
		\[
		\db(\mathcal{F}_{\delta}([[\cdot]]),\mathcal{F}_{\delta} ([[\cdot]]')) \leq C \cdot  \ddata([[\cdot]],[[\cdot]]').
		\]
	\end{theorem}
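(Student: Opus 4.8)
The plan is to reduce the barcode of $\mathcal{F}_\delta([[\cdot]])$ to two real-valued functions on sequents and to show each is Lipschitz in the data map. For a sequent $s = (\Gamma \vdash \Delta)$ write $G_s = \bigcap_{a \in \Gamma}[[a]]$ and $D_s = \bigcup_{x \in \Delta}[[x]]$. Unwinding Definition~\ref{def:fuzzy} together with Filtration~I, one has $s \in \mathcal{S}'_t([[\cdot]])$ iff $t \geq \tau(s)$, where the \emph{birth time} is $\tau(s) = 1 - m(G_s \cap D_s)/m(G_s)$. Since the filtration is monotone, $s$ is minimal in the restricted poset at time $t$ exactly while it is present and no strict predecessor lying in the common sub-poset $\mathcal{S}_\delta$ has yet appeared; hence $I_s(\mathcal{F}_\delta([[\cdot]])) = [\tau(s), d(s))$ with \emph{death time} $d(s) = \min\{\tau(s') \mid s' \prec s,\ s' \in \mathcal{S}_\delta\}$, and $d(s) = 1$ when $s$ has no such predecessor. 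The hypothesis that the same set of sequents appears in $\mathcal{F}_\delta([[\cdot]])$ and $\mathcal{F}_\delta([[\cdot]]')$ enters precisely here: it guarantees that $\mathcal{S}_\delta$, and thus the index set in each $\min$ defining $d(s)$, is identical for the two data maps.

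First I would bound the variation of a single birth time. Setting $M = m(G_s)$, $N = m(G_s \cap D_s)$ (and $M', N'$ for the second map), the elementary estimate $|N/M - N'/M'| \le (|M - M'| + |N - N'|)/M'$ (valid since $0 \le N \le M$) reduces matters to (i) a lower bound on the denominators and (ii) $L^1$-stability of the set operations. For (i), $\eps$-granularity gives $m([[a]]) > \eps$, and the defining inequality of $\mathcal{F}_\delta$ gives $m(G_s) > \delta\, m([[a]]) > \delta\eps$ for each $a \in \Gamma$ (with $m(G_s) = 1$ when $\Gamma = \emptyset$), so $M, M' > \delta\eps$ for every $s \in \mathcal{S}_\delta$. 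For (ii), the standard containments $\bigcap_i S_i \,\triangle\, \bigcap_i S_i' \subseteq \bigcup_i (S_i \triangle S_i')$ and its analogue for unions yield $|M - M'| \le \sum_{a \in \Gamma} m([[a]] \triangle [[a]]')$ and $|N - N'| \le \sum_{a\in\Gamma} m([[a]]\triangle[[a]]') + \sum_{x\in\Delta} m([[x]]\triangle[[x]]')$, each at most $\ddata([[\cdot]],[[\cdot]]')$. Combining gives $|\tau(s) - \tau'(s)| \le \tfrac{2}{\delta\eps}\,\ddata([[\cdot]],[[\cdot]]')$.

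Next I would transfer this to death times. Because $d(s)$ and $d'(s)$ are minima of $\tau(\cdot)$ and $\tau'(\cdot)$ over the \emph{same} finite index set, $|d(s) - d'(s)| \le \max_{s' \prec s}|\tau(s') - \tau'(s')| \le \tfrac{2}{\delta\eps}\,\ddata$ (with both equal to $1$, hence no contribution, when $s$ is poset-minimal). It then remains to bound the per-sequent contribution to $\db$. For half-open intervals one has $|[b,d) \,\triangle\, [b',d')| \le |b - b'| + |d - d'|$ in all cases, including when one interval is empty; applied to $I_s$ and $I_s'$ this gives $|I_s(\mathcal{F}_\delta([[\cdot]])) \,\triangle\, I_s(\mathcal{F}_\delta([[\cdot]]'))| \le |\tau(s) - \tau'(s)| + |d(s) - d'(s)|$. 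Summing over the at most $2^{\card(V_A) + \card(V_S)}$ sequents yields the theorem with $C = \tfrac{4}{\delta\eps}\cdot 2^{\card(V_A)+\card(V_S)}$.

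The symmetric-difference containments and the quotient estimate are routine; the step demanding care is the death-time analysis. The main obstacle is to justify that the combinatorial structure governing minimality---which strict predecessors can kill a given sequent, and hence the index set over which $d(s)$ is a minimum---is literally the same for both data maps, so that death times admit a term-by-term comparison; this is exactly what the equal-sub-poset hypothesis supplies, and without it two barcodes could differ by the full length of a bar even for arbitrarily close data. A secondary point to verify is the interval symmetric-difference inequality in the degenerate case where a bar is empty for one map but not the other, which is handled via $d'(s) - \tau'(s) \le |d(s) - d'(s)| + |\tau(s) - \tau'(s)|$ using $d(s) \le \tau(s)$.
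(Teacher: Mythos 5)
Your proposal is correct and follows essentially the same route as the paper: the core step is the same Lipschitz bound on birth times $\tau(s)$ (the paper's Lemma~\ref{lem:stability}, with the identical use of $\eps$-granularity and the $\delta$-condition to lower-bound $m(\bigcap_{a\in\Gamma}[[a]])$ by $\delta\eps$, and symmetric-difference containments to control the numerator and denominator). You additionally spell out the death-time comparison as a minimum of birth times over the common index set $\mathcal{S}_\delta$ and the interval symmetric-difference estimate, details the paper compresses into ``the left as well as the right endpoints of the intervals differ by at most $C$''.
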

	
	The proof of the theorem will follow from the following lemma.
	For a sequent $s = (\Gamma \vdash \Delta)$, 
	and 
	a map
	$[[\cdot]]: V_A \cup V_S \rightarrow \mathfrak{M}$, we denote
	\[
	t(s,[[\cdot]]) = \inf \{t \; \mid \;  s \in \mathcal{S}_{t}'([[\cdot]]\}.
	\]
	\begin{lemma}
		\label{lem:stability}
		With the same hypothesis as in Theorem~\ref{thm:stability},
		there exists a $C= C(\card(V_A \cup V_S, \eps,\delta)) > 0$ such that, 
		for any sequent $s = (\Gamma \vdash \Delta)$,
		\[
		|t(s,[[\cdot]]) - t(s,[[\cdot]]')| \leq C \cdot \ddata([[\cdot]],[[\cdot]]')
		\]
	\end{lemma}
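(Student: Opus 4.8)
The plan is to reduce the lemma to an elementary estimate on the difference of two quotients of measures, after first writing the birth time $t(s,[[\cdot]])$ in closed form. For a sequent $s = (\Gamma \vdash \Delta)$ set $A_\Gamma = \bigcap_{a \in \Gamma}[[a]]$ and $B_\Delta = \bigcup_{x \in \Delta}[[x]]$. Unwinding Definition~\ref{def:fuzzy}, the membership $s \in \mathcal{S}'_t([[\cdot]])$ is equivalent to $m(A_\Gamma \cap B_\Delta)/m(A_\Gamma) \ge 1-t$, i.e. to $t \ge 1 - m(A_\Gamma \cap B_\Delta)/m(A_\Gamma)$. Since $A_\Gamma \cap B_\Delta \subseteq A_\Gamma$ this threshold lies in $[0,1]$, so the set $\{t \mid s \in \mathcal{S}'_t([[\cdot]])\}$ is exactly the interval with that left endpoint and
\[
t(s,[[\cdot]]) = 1 - \frac{m(A_\Gamma \cap B_\Delta)}{m(A_\Gamma)}.
\]
Writing $A'_\Gamma, B'_\Delta$ for the sets built from $[[\cdot]]'$, the quantity to bound is precisely $\left| \frac{m(A_\Gamma \cap B_\Delta)}{m(A_\Gamma)} - \frac{m(A'_\Gamma \cap B'_\Delta)}{m(A'_\Gamma)}\right|$.

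Next I would record the two set-theoretic facts that make these measures vary continuously with the data map. For any measurable sets one has the subadditivity $(S_1 \cap S_2)\Delta(T_1 \cap T_2) \subseteq (S_1 \Delta T_1)\cup(S_2 \Delta T_2)$, and likewise with $\cup$ in place of $\cap$; iterating over the variables in $\Gamma$ and $\Delta$ gives $m(A_\Gamma \Delta A'_\Gamma) \le \sum_{a \in \Gamma} m([[a]]\Delta[[a]]')$ and $m(B_\Delta \Delta B'_\Delta) \le \sum_{x \in \Delta} m([[x]]\Delta[[x]]')$, each at most $\ddata([[\cdot]],[[\cdot]]')$ by nonnegativity of the summands. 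Applying the intersection case once more yields $m\big((A_\Gamma \cap B_\Delta)\Delta(A'_\Gamma \cap B'_\Delta)\big) \le m(A_\Gamma \Delta A'_\Gamma) + m(B_\Delta \Delta B'_\Delta) \le 2\,\ddata([[\cdot]],[[\cdot]]')$. Since $|m(S)-m(T)| \le m(S\Delta T)$, this controls both the numerators and the denominators of the two quotients by a constant multiple of $\ddata([[\cdot]],[[\cdot]]')$.

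The final step is the elementary inequality for quotients: if $q,q' \ge \eta > 0$ with $0 \le p \le q$ and $0 \le p' \le q'$, then $\left|\frac{p}{q} - \frac{p'}{q'}\right| \le \frac{|p-p'| + |q-q'|}{\eta}$, obtained by writing $pq'-p'q = q'(p-p') + p'(q'-q)$ and using $p'/q' \le 1$. Taking $p = m(A_\Gamma \cap B_\Delta)$, $q = m(A_\Gamma)$ and primes for $[[\cdot]]'$, the previous paragraph gives $|p-p'| \le 2\,\ddata([[\cdot]],[[\cdot]]')$ and $|q-q'| \le \ddata([[\cdot]],[[\cdot]]')$, so the lemma follows with $C = 3/\eta$ as soon as a uniform lower bound $\eta$ on $m(A_\Gamma)$ and $m(A'_\Gamma)$ is available.

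Securing that lower bound is the one genuinely load-bearing point, and it is exactly where all the hypotheses enter. The $\delta$-restriction defining $\mathcal{F}_\delta([[\cdot]])$ forces $m(A_\Gamma) = m(\bigcap_{b\in\Gamma}[[b]]) > \delta\, m([[a]])$ for $a \in \Gamma$, and $\eps$-granularity gives $m([[a]]) > \eps$, whence $m(A_\Gamma) > \delta\eps$; the hypothesis that the same sequents appear in $\mathcal{F}_\delta([[\cdot]])$ and $\mathcal{F}_\delta([[\cdot]]')$ yields the identical bound $m(A'_\Gamma) > \delta\eps$ for the primed map. One may therefore take $\eta = \delta\eps$ and $C = 3/(\delta\eps)$. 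Accordingly I would restrict attention throughout to sequents appearing in $\mathcal{F}_\delta$ — precisely the regime in which the theorem is applied, and for which the closed form above is well defined since the denominators are bounded away from zero. This is also what pre-empts the only real obstacle: a vanishing or near-vanishing antecedent measure $m(A_\Gamma)$, which would otherwise let the two birth times differ wildly under an arbitrarily small perturbation of the data.
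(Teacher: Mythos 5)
Your proof is correct and follows essentially the same route as the paper's: write the birth time as a ratio $p/q$ of measures of the antecedent and its intersection with the consequent, bound the perturbations of numerator and denominator by $\ddata([[\cdot]],[[\cdot]]')$, bound the denominator below by $\delta\eps$ using $\eps$-granularity together with the $\delta$-restriction (and the hypothesis that both filtrations contain the same sequents), and finish with an elementary estimate on the difference of two quotients. You are in fact slightly more careful than the paper in two places: you obtain the correct closed form $t(s,[[\cdot]]) = 1 - p/q$ rather than $p/q$ (immaterial, since only the difference of birth times is bounded), and you actually prove the bounds $|p-p'|, |q-q'| \leq 2\,\ddata([[\cdot]],[[\cdot]]')$ via subadditivity of symmetric differences, a step the paper merely asserts.
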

	
	\begin{proof}
		Let 
		\[
		p = m(\bigcap_{a \in \Gamma}[[a]] \cap \bigcup_{x \in \Delta} [[x]]),
		\]
		\[
		q = 
		m(\bigcap_{a \in \Gamma}[[a]]),
	    \]
		so that 
		\[
		t(s,[[\cdot]]) = \frac{p}{q}.
		\]

		Similarly,
		let
		\[
		p' = m(\bigcap_{a \in \Gamma}[[a]]' \cap \bigcup_{x \in \Delta} [[x]]'),
		\]
		\[
		q' = 
		m(\bigcap_{a \in \Gamma}[[a]]'),
		\]
		so that 
		\[
		t(s,[[\cdot]]') = \frac{p'}{q'}.
		\]
		
		Let $\Delta p = p' -p, \Delta q = q'  -q$. Without loss of generality we can assume
		$\Delta q \geq 0$ (otherwise exchange roles of $s$ and $s'$).
		
		Observe that it follows from the hypothesis that 
		\begin{equation}
		\label{eqn:proof:lem:stability:1}
		|\Delta p|,|\Delta q | \leq \card(V_A \cup V_S) \cdot \ddata([[\cdot]],[[\cdot]]'),
		\end{equation}
		and also that
		\begin{equation}
		\label{eqn:proof:lem:stability:2}
		q \geq \delta \cdot \eps.
		\end{equation}
		
		Then,
		\begin{eqnarray*}
		\left| \frac{p}{q} - \frac{p'}{q'} \right| &=& \left| \frac{p}{q} - \frac{p+ \Delta p}{q + \Delta q} \right| \\
        &=&\left| \frac{p (\Delta q) - (\Delta p) q}{q (q + \Delta q)} \right| \\
        &=& \left| \frac{p}{q}\cdot \frac{\Delta q}{ q + \Delta q} - \frac{\Delta p}{q + \Delta q}\right| \\
        &\leq & \left|\frac{p}{q}\cdot \frac{\Delta q}{q + \Delta q }\right| + \left|  \frac{\Delta p}{q + \Delta q} \right| \\
        &\leq& \left|\frac{1}{q + \Delta q}\right| \left|(\Delta p + \Delta q) \right| \\
        &\leq & \frac{1}{q}(|\Delta p| + |\Delta q|).
		\end{eqnarray*}
		
		The lemma now follows from \eqref{eqn:proof:lem:stability:1} and
		\eqref{eqn:proof:lem:stability:2}.
		
		\hide{
		It follows from the hypothesis that $q,q' > \eps$. 
		
		Moreover, $|q-q'|,|p - p'| \leq \card(V_A \cup V_S) d([[\cdot]],[[\cdot]]')$.
		The lemma follows.
		}
		
	\end{proof}
	
	\begin{proof}[Proof of Theorem~\ref{thm:stability}]
	
	Let $\mathcal{S}_\delta$ denote the set of sequents appearing in 
	$\mathcal{F}_\delta([[\cdot]])$ (equivalently in $\mathcal{F}_\delta([[\cdot]])$).
	Recall from Definition~\ref{def:barcode-distance} that
	\[
	\db(\mathcal{F}_\delta([[\cdot]]),\mathcal{F}_\delta([[\cdot]')) = \sum_{s \in \mathcal{S}_\delta} |I_s(\mathcal{F}_\delta([[\cdot]])) \Delta I_s(\mathcal{F}_\delta([[\cdot]])')|.
	\]
	
	It follows from Lemma~\ref{lem:stability} that there exists a constant 
	$C= C(\card(V_A \cup V_S), \eps,\delta) > 0$ such that the left as well as the right endpoints of the intervals
	$I_s(\mathcal{F}_\delta([[\cdot]])), I_s(\mathcal{F}_\delta([[\cdot]]'))$ differ by at most $C$.
	The theorem follows.  
	\end{proof}

	\hide{
		\subsection{Another approach -- letting the binary relation $\preqeq$ on the sequents also depend on the data}
		For two sequents $s = (\Gamma \vdash \Delta), s' = (\Gamma' \vdash \Delta')$
		in $\mathcal{S}(\mathcal{D})$ 
		and $0 \leq t \leq 1$, we define the predicate
		$s \preceq_t s'$ by
		\[
		\bigcap_{a \in \Gamma} a \subset_t \bigcap_{a' \in \Gamma'} a' \wedge
		\bigcup_{b' \in \Delta'} b' \subset_t \bigcup_{b \in \Delta} b.
		\]
		
		Note that $s \preceq_1 s' \Leftrightarrow s \preceq s'$. For $t \neq 1$,
		$\preceq_t$ is not necessarily a partial order.
		
		We define a filtration $\mathcal{F}(\mathcal{D}) = (K_t(\mathcal{D}))_{t \in [0,1]}$ of simplicial complexes
		associated to $\mathcal{D}$ as follows.
		
		\begin{definition}
			\begin{eqnarray*}
				K_t(\mathcal{D})_0 &=&  \mathcal{S}_t(\mathcal{D}),
			\end{eqnarray*}
			and for $p > 0$, and $s_0,\ldots,s_p \in \mathcal{S}_t(\mathcal{D})$,
			\[
			[s_0,\ldots,s_p] \in K_t(\mathcal{D})_p
			\]
			if and only if 
			\[
			s_i \preceq_t s_j, \mbox{ for $0 \leq i,j \leq p$}.
			\]
		\end{definition}
	}
	
	\section{Filtrations posets and persistent homology}
	\label{sec:topology}
	In this section we establish a relationship between the barcode
	of a filtration of a poset with the persistent homology  barcodes of a filtration
	of a natural simplicial complex that we associate to the filtration
	of poset.
	
	We assume that the reader is familiar with the basic definition of 
	barcodes in the theory of persistent homology. All necessary background is included in the Appendix (Section~\ref{sec:persistent}).

	There is a very well studied and natural simplicial complex $\Delta(P)$
	(called the order complex of the poset) that is
	associated to a finite poset $P$. The simplices of $\Delta(P)$ corresponds to chains of the poset $P$. 
	
	\begin{example}
		Consider the poset whose Hasse diagram is shown below.
		\[
		\xymatrix{
			1 && 2 \\
			3 \ar[u]\ar[rru] &&  4 \ar[u]\ar[llu] \\
			5 \ar[u]\ar[rru] && 6 \ar[u]\ar[llu] 
		}
		\]
		The order complex $\Delta(P)$ is depicted in Figure~\ref{fig:sphere}.
		\begin{figure}[H]
			\begin{center}
				\includegraphics[scale=1]{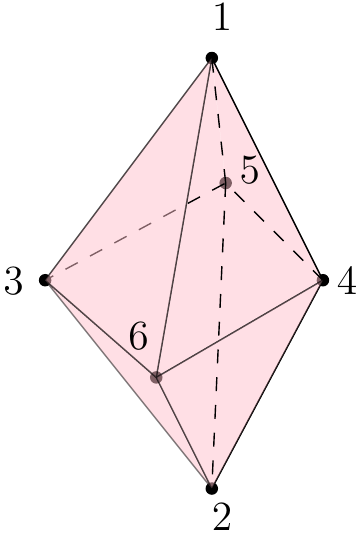}
				\caption{\small The order complex, $\Delta(P)$}
				\label{fig:sphere}
			\end{center}
		\end{figure}
	\end{example}
	
	A filtration $\mathcal{F} = (P_t)_{t \in [0,1]}$ gives rise to 
	a filtration of $\Delta(\mathcal{F}) = (\Delta(P_t))_{t \in [0,1]}$. 
	It is natural to try to establish a connection between 
	$\barcode(\mathcal{F})$ and $\barcode(\Delta(\mathcal{F}))$ where 
	for any filtration $(\Delta)_t)_{t \in [0,1]}$ we denote by
	$\barcode((\Delta_t)_{t \in [0,1]})$ the persistent homology barcode
	of the filtration $(\Delta_t)_{t \in [0,1]}$ (see Definition~\ref{def:barcode_multiplicity}).

	Unfortunately, the order complex $\Delta(P)$ of a poset $P$, while
	being a very useful topological object for studying properties of the poset $P$, the homology groups of $\Delta(P)$ does not retain enough information
	about the $\Min(P)$. Note that the sets $\Min(P_t), t \in [0,1]$ play a crucial role in the
	definition of $\barcode(\mathcal{F})$.
	
	For example consider the barcode of a filtration $\mathcal{F}$ of
	posets depicted in Figure~\ref{fig:barcode-poset}.
	
	\begin{figure}[h]
		\begin{center}
			\includegraphics[scale=.8]{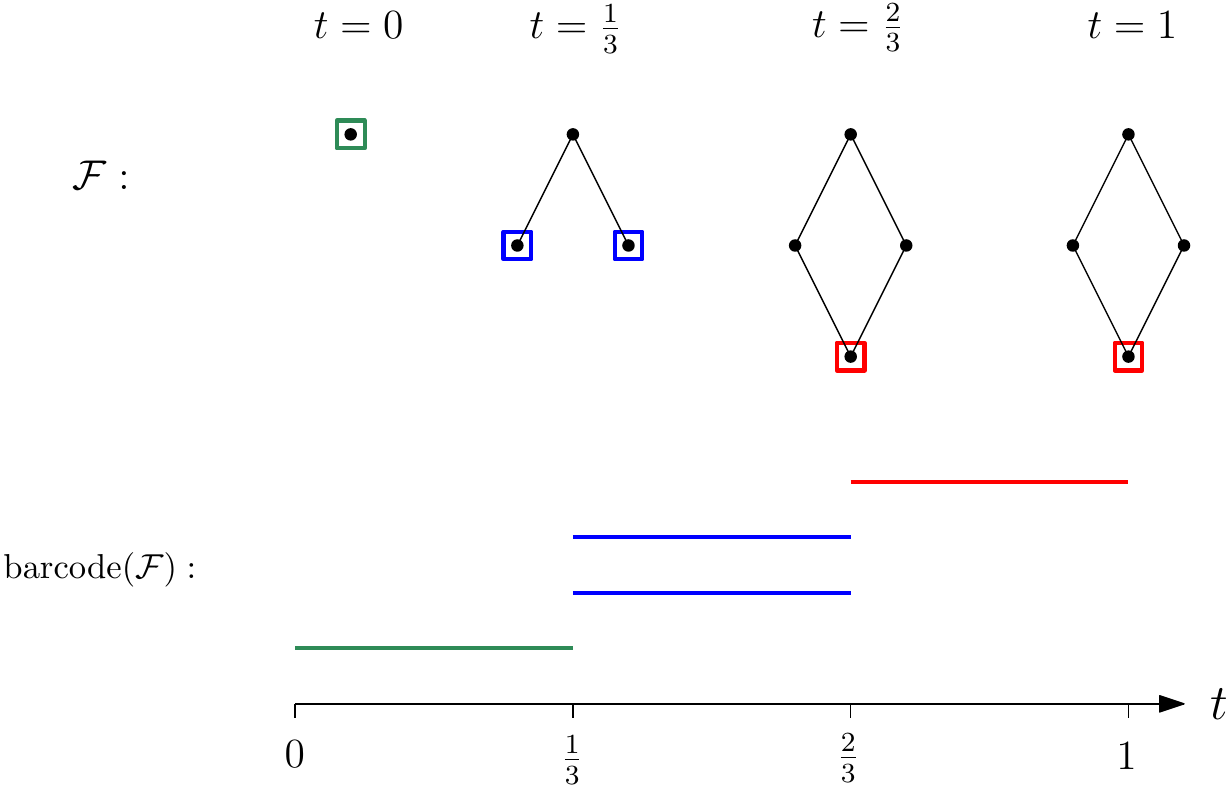}%
			\caption{\small Barcode of the poset filtration $\mathcal{F}$.}
			\label{fig:barcode-poset}
		\end{center}
	\end{figure}
	
	The barcode of the order complex of the poset filtration 
	$\mathcal{F}$ depicted above is 
	shown in Figure~\ref{fig:barcode-old}. Notice that this does not give any information about the
	barcode of the  poset filtration $\mathcal{F}$ (defined in
	Figure~\ref{fig:barcode-poset}).
	
	\begin{figure}[h]
		\begin{center}
			\includegraphics[scale=.8]{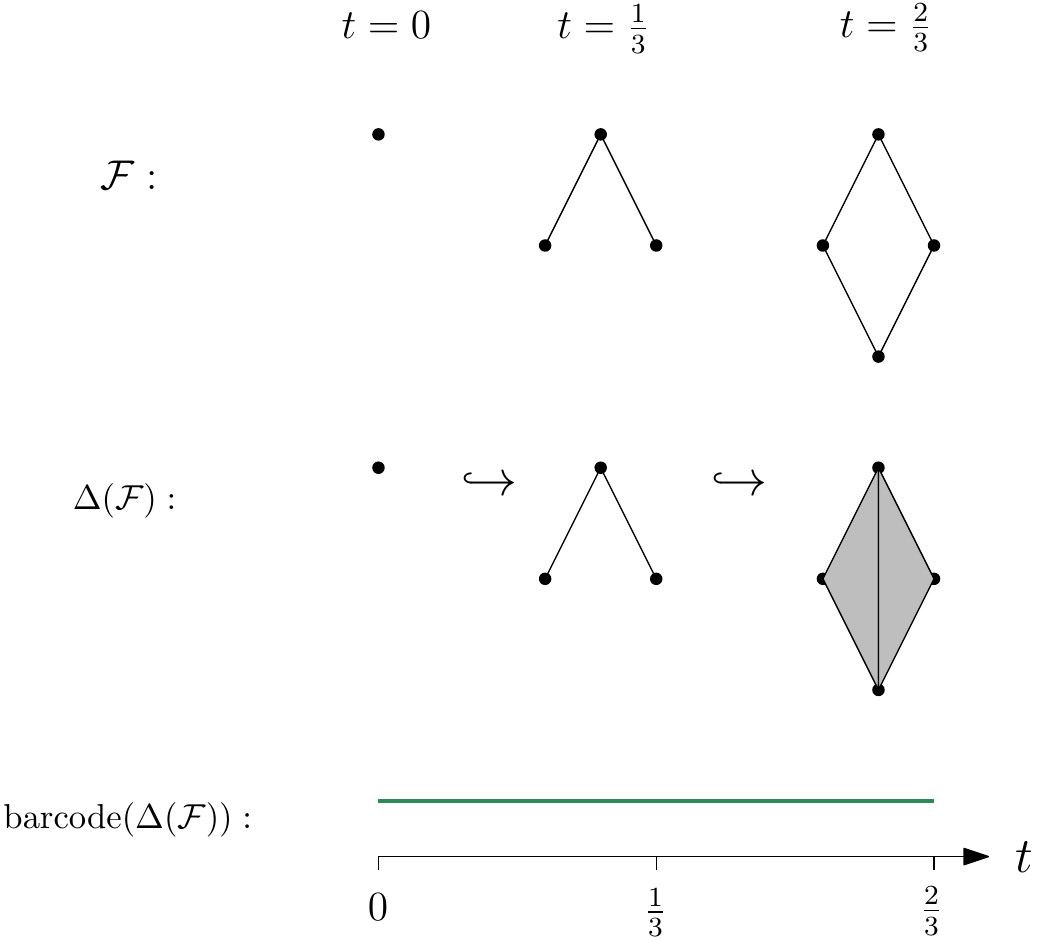}
			\caption{\small Barcode of the order complex of the poset filtration  $\Delta(\mathcal{F})$.}
			\label{fig:barcode-old} 
		\end{center}
	\end{figure}
	In order to remedy this deficiency of the order complex we introduce
	a variant of the order complex which we call denote by $\widetilde{\Delta}(P)$.
	
	The key difference between $\Delta(P)$ and $\widetilde{\Delta}(P)$ is that
	instead of including a simplex  of dimension $p$ for every chain $c_p$ in $P$
	of length $p+1$, we include a certain iterated based suspension,
	$\widetilde{\Delta}(c_p)$ that we define next.
	
	We define $\widetilde{\Delta}(c_p)$ for a chain 
	$c_p = (s_0 \prec s_1 \prec \cdots \prec s_p)$ of $P$ of length $p+1$
	by induction. Let for $0\leq i \leq p$, $c_i$ denote the chain
	$s_0 \prec \cdots \prec s_i$.
	
	For any based simplicial space $(X,x_0)$, 
	we denote by $\Sigma(X,x_0)$ the reduced suspension 
	\[
	X \times [0,1]/ X \times \{0\} \cup X \times \{1\} \cup \{x_0\} \times [0,1].
	\]
	The reduced suspension is a basic construction in algebraic topology 
	(see \cite{May}).
	
	\begin{remark}
	    We consider simplicial spaces since the reduced suspension as
	    defined above of a simplicial complex would in general be a simplicial space.
	\end{remark}
	
	\begin{definition}
		\begin{eqnarray*}
			\widetilde{\Delta}(c_0) &=& \Sigma(\emptyset \coprod \{s_0\}, \{s_0\}) \\
			&=& \{s_0\}; \\
			\widetilde{\Delta}(c_{i+1}) &=& \Sigma(\widetilde{\Delta}(c_i) \coprod \{s_{i+1}\}, \{s_{i+1}\}), 0 < i <p.
		\end{eqnarray*}
	\end{definition}

	Notice that for any sub-chain $c'_q = (s_{i_0} \prec \cdots \prec s_{i_q})$
	of the chain $c_p$
	there is a natural inclusion
	\begin{equation}
		\label{eqn:identifications}
		\widetilde{\Delta}(c'_q) \hookrightarrow \widetilde{\Delta}(c_p).    
	\end{equation}
	
	The simplicial sets $\widetilde{\Delta}(c_p)$ for $p=0,1,2$ is depicted below.
	
	\begin{figure}[H]
		\begin{center}
			\includegraphics[scale=0.75]{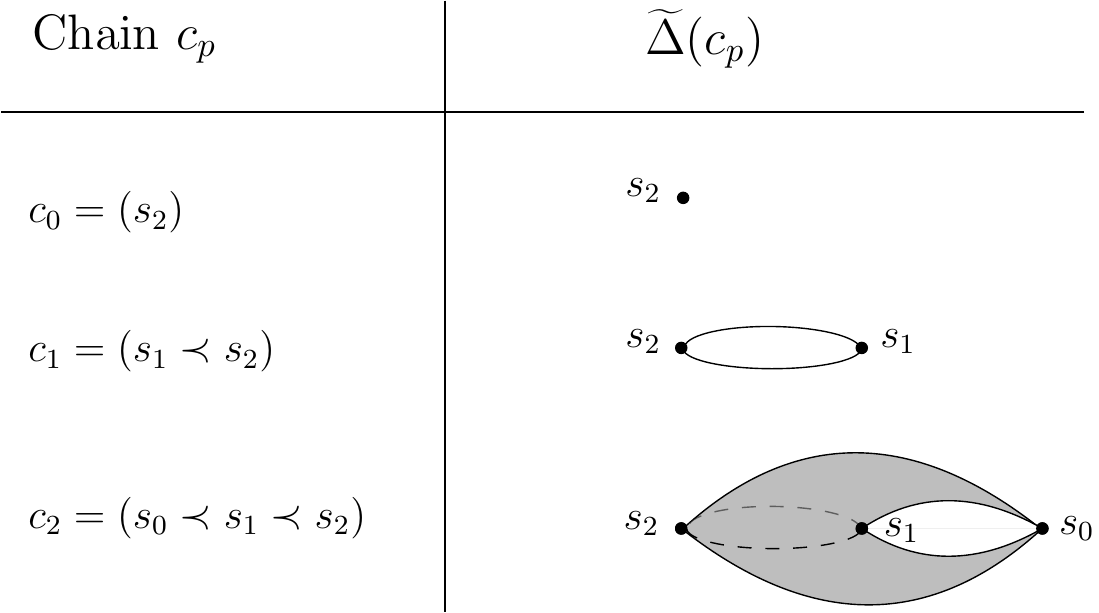}
			\caption{\small The simplicial sets $\widetilde{\Delta}(c_p)$ for $p=0,1,2$.}
			\label{fig:suspension-of-a-chain}
		\end{center}
	\end{figure}
	We now define
	\begin{definition}[$\widetilde{\Delta}(P)$]
		\label{def:suspension-of-a-poset}
		We define $\widetilde{\Delta}(P)$ as the union over all chains $c$ of 
		$P$ of the simplicial space $\widetilde{\Delta}(P)$ modulo the identifications implied by \eqref{eqn:identifications}
	\end{definition}
	
	Note that the space 
	$\widetilde{\Delta}(P)$ as defined above
	below will be a simplicial space (whose set of 
	$0$-simplices coincides with $P$) instead of
	a simplicial complex. We can obtain canonically a simplicial complex from $\widetilde{\Delta}(P)$  by taking a
	barycentric subdivision of each simplex in $\widetilde{\Delta}(P)$.

	Finally for
	a filtration $\mathcal{F} = (P_t)_{t \in [0,1]}$, we denote by $\widetilde{\Delta}(\mathcal{F})$
	the filtration $(\widetilde{\Delta}(P_t))_{t \in [0,1]}$ of the corresponding simplicial spaces defined in Definition~\ref{def:suspension-of-a-poset}.
	
	We now consider the persistent barcodes of the filtration 
	$\widetilde{\Delta}(\mathcal{F})$ (see Definition~\ref{def:barcode_multiplicity}).
	It is clear from the construction that if $p \in P$ is a minimal element in  poset $P$,
	then each chain $c$ of length $\ell$ with $p$ as its minimum element
	gives rise to a non-zero homology class in dimensions $0$ to $\ell-1$. Moreover, for
	an element $p \succ q$ in the poset $P$, with $p \neq q$, then each chain $c$ of $P$
	with $p$ as its minimum element is a subchain of some chain $d$ with $q$ as its minimum element. The linear maps 
	$\HH_i(\widetilde{\Delta}(c)) \rightarrow \HH_i(\widetilde{\Delta}(d))$ induced by
	the inclusion 
	$\widetilde{\Delta}(c) \hookrightarrow \widetilde{\Delta}(d)$ are $0$ 
	for $i > 0$. This observation leads to the following theorem.
	
	\begin{theorem}
	\label{thm:persistent}
		Let $(P,\preceq)$ be a finite poset and $\mathcal{F} = (P_t)_{t \in [0,1]}$ a filtration of $P$ by subposets $P_t$, such that $P_0 = \{p_0\}$, where $p_0$ is the unique maximal element of $P$. 
		Then, for each
		$p  \in P - \{p_0\}$ such that
		$(p,I_p) \in \barcode(\mathcal{F})$ with $I_p = [t,t']$, 
		there exists $i \geq 0$ such that 
		$\mu^{t,t'}_i(\widetilde{\Delta}(\mathcal{F})) > 0$.
	\end{theorem}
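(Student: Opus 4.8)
The plan is to realize the poset bar $I_p = [t,t')$ as a degree-one persistent homology bar of $\widetilde{\Delta}(\mathcal{F})$, using the two observations recorded just before the theorem. First I would unwind the endpoints of $I_p$. Since $P_0 = \{p_0\}$ with $p_0$ the unique maximal (hence greatest) element, every element of $P$ lies below $p_0$; in particular, whenever $p \in P_s$ the pair $(p \prec p_0)$ is a chain, and $\widetilde{\Delta}(p\prec p_0)$ is a circle carrying a class $\alpha \in \HH_1$. Because $P_s \subset P_{s'}$ for $s\le s'$, the element $p$ is minimal exactly on a half-open interval: it becomes minimal at the instant $t$ it enters the filtration (no element $\prec p$ being present then), and it ceases to be minimal at the first instant $t'$ at which some $q \prec p$ enters. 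I would fix such a $q$ and the chain $c = (p \prec p_0)$, and aim to show that the image $\alpha_s$ of $\alpha$ in $\HH_1(\widetilde{\Delta}(P_s))$ is born at $t$ and dies at $t'$.

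Death is the easy direction. For $s \in [t,t')$ the element $p$ is minimal in $P_s$, so by the first observation (a minimal $p$ with a chain of length $\ell$ through it yields nonzero classes in degrees $0,\dots,\ell-1$) the class $\alpha_s$ is nonzero. At $s = t'$ the chain $c$ becomes a proper subchain of $d = (q \prec p \prec p_0)$, whose minimum is $q \prec p$. The inclusion $\widetilde{\Delta}(c)\hookrightarrow \widetilde{\Delta}(d)$ induces the zero map on $\HH_1$ by the second observation, and the map $\HH_1(\widetilde{\Delta}(P_t)) \to \HH_1(\widetilde{\Delta}(P_{t'}))$ sends $\alpha$ along the factorization $\HH_1(\widetilde{\Delta}(c)) \to \HH_1(\widetilde{\Delta}(d)) \to \HH_1(\widetilde{\Delta}(P_{t'}))$; hence $\alpha_{t'} = 0$. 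Thus the class is alive on all of $[t,t')$ and dead at $t'$, so its death parameter is exactly $t'$.

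Birth is the delicate direction and is where I expect the real work to be. I must show $\alpha_t \notin \image\big(\HH_1(\widetilde{\Delta}(P_{t^-})) \to \HH_1(\widetilde{\Delta}(P_t))\big)$, i.e. that adding $p$ genuinely creates the loop rather than merely relabeling an older one. Two facts help. First, $\widetilde{\Delta}(P_s)$ is connected for every $s$ (each element is joined to $p_0$ through a circle), so the homology created by $p$ can only live in degree $\ge 1$. Second, the second observation already annihilates the dangerous old classes: any old chain whose minimum lies above $p$ becomes, after $p$ enters, a subchain of a chain with minimum $p$, and the corresponding degree-one classes are killed, so the only surviving old classes come from chains incomparable to $p$. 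I would therefore pass to the pair $\big(\widetilde{\Delta}(P_t),\ \widetilde{\Delta}(P_t\setminus\{p\})\big)$ and, using the iterated-suspension description of the chains through $p$, compute the relative homology $\HH_*\big(\widetilde{\Delta}(P_t),\ \widetilde{\Delta}(P_t\setminus\{p\})\big)$, showing it is nonzero in degree one and that $\alpha_t$ maps to a nonzero relative class. By exactness this certifies $\alpha_t \notin \image(\HH_1(\widetilde{\Delta}(P_{t^-})))$, which is the birth at $t$. The main obstacle is precisely this relative computation: making the ``star of $p$ modulo its link'' argument rigorous for the simplicial space $\widetilde{\Delta}$, and controlling the case in which several elements enter $P_\bullet$ simultaneously at $t$ (so that $P_{t^-} \ne P_t \setminus\{p\}$), where one should add the new minimal elements one at a time and track the corresponding direct summands.

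Finally, having a class born at $t$ and dying at $t'$ in degree one, I would invoke the definition of the multiplicities $\mu^{t,t'}_i$ (Definition~\ref{def:barcode_multiplicity}) to conclude $\mu^{t,t'}_1(\widetilde{\Delta}(\mathcal{F})) > 0$, giving the required $i = 1$. The same bookkeeping shows more generally that every new class produced by $p$ in any degree, coming from longer chains with minimum $p$, is supported on chains through $p$ and hence dies at $t'$ by the factorization argument above; so even if the degree-one relative class were to vanish in some degenerate configuration, one of the higher-degree classes supplies the required $i$.
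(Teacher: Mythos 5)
Your proposal follows essentially the same route as the paper: the paper's entire proof of Theorem~\ref{thm:persistent} is the sentence ``Follows easily from the prior discussion,'' where the prior discussion consists of exactly the two observations you invoke --- a minimal element $p$ lying under a chain of length $\ell$ contributes nonzero classes in degrees $0$ through $\ell-1$ of $\widetilde{\Delta}$, and once some $q \prec p$ appears the inclusion of the chain through $p$ into the longer chain through $q$ kills those classes in positive degree. Your death argument (factoring $\HH_1(\widetilde{\Delta}(p \prec p_0)) \to \HH_1(\widetilde{\Delta}(q \prec p \prec p_0)) \to \HH_1(\widetilde{\Delta}(P_{t'}))$ through a zero map) is precisely what the paper intends, and your choice of degree $1$ via the chain $p \prec p_0$ (available because $p_0 \in P_0$ is the unique maximal element) is the right one, since $\widetilde{\Delta}(P_s)$ is connected and degree $0$ carries nothing. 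Where you go beyond the paper is in recognizing that the birth condition --- that $\alpha_t$ is not in the image of $\HH_1(\widetilde{\Delta}(P_{s'}))$ for $s' < t$, and indeed that $\alpha_s$ stays out of those images for all $s \in [t,t')$ as Definition~\ref{def:barcode_multiplicity} requires --- is not addressed by the paper's discussion at all; you correctly isolate it as the delicate step and propose a relative-homology (``star of $p$ modulo its link'') computation for $\bigl(\widetilde{\Delta}(P_t), \widetilde{\Delta}(P_t \setminus \{p\})\bigr)$, handling simultaneous entries by adding new minimal elements one at a time. That computation is left as a sketch, so your argument is not complete; but the same gap is present, unacknowledged, in the paper's one-line proof, and your write-up is the more honest and more nearly rigorous of the two. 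To close it you would need to carry out the excision argument for the simplicial space $\widetilde{\Delta}$ and verify that the new degree-one class maps to a nonzero relative class, which is plausible from the iterated-suspension description but does require checking.
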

	
	\begin{proof}
		Follows easily from the prior discussion.
	\end{proof}
	
	As illustration of Theorem~\ref{thm:persistent} we
	show the barcode $\barcode(\widetilde{\Delta}(\mathcal{F}))$ 
	for the same poset filtration $\mathcal{F}$ whose barcode is depicted in Figure~\ref{fig:barcode-poset}.
	Observe that 
	the barcode of the
	poset filtration $\mathcal{F}$ depicted in Figure~\ref{fig:barcode-poset}
 is \emph{embedded} in that of the persistent homology filtration $\widetilde{\Delta}(\mathcal{F})$.
 Figure~\ref{fig:barcode-old} shows that this is not true for the 
	filtration $\Delta(\mathcal{F})$ of the order complex of $P$.

	\begin{figure}[H]
		\begin{center}
			\includegraphics[scale=0.75]{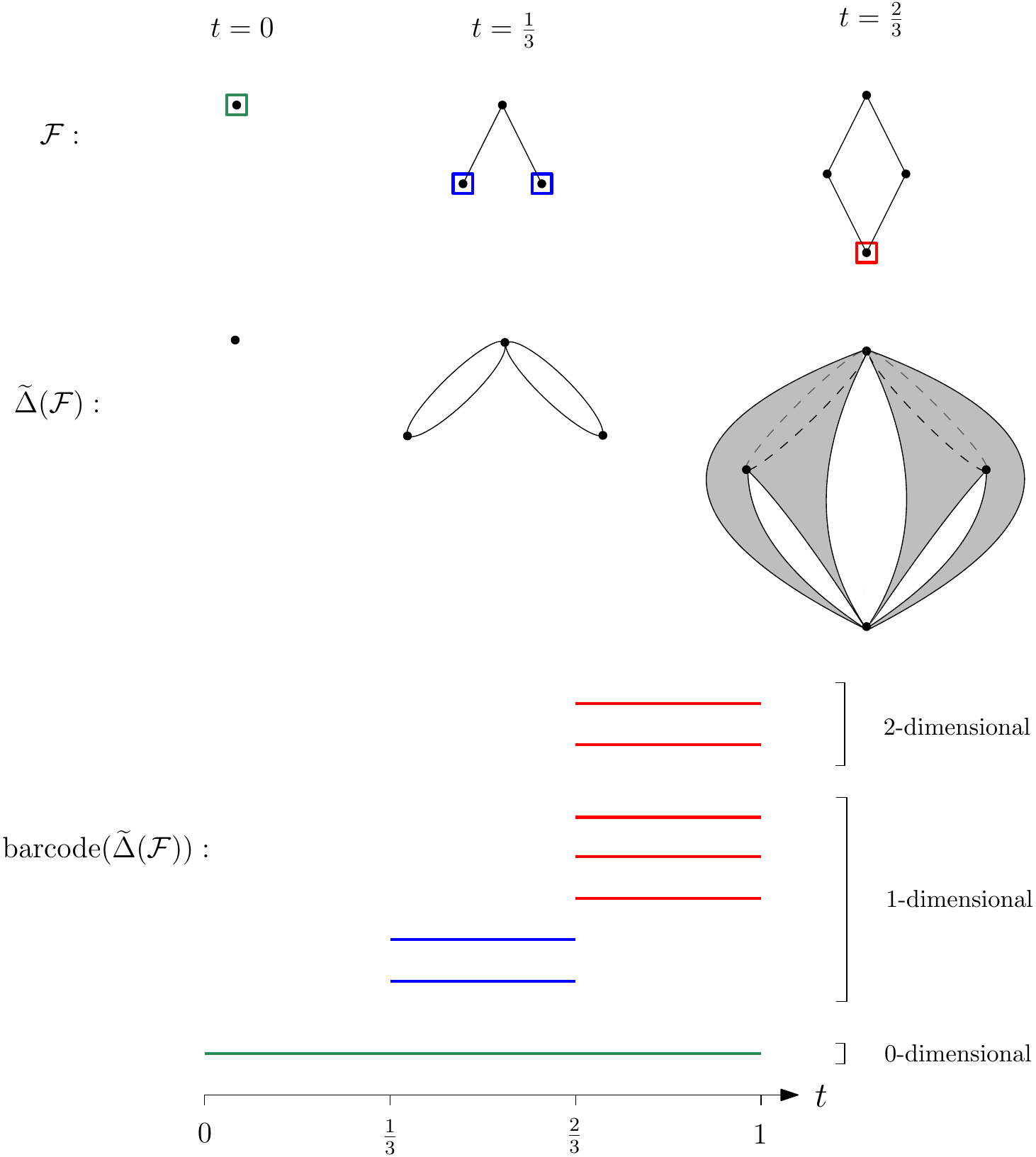}
			\caption{\small The barcode $\barcode(\widetilde{\Delta}(\mathcal{F}))$.}
			\label{fig:barcode-new}
		\end{center}
	\end{figure}

	\section{Application}
	\label{sec:application}
	
	In this section, we present an example using real-world 
	data to demonstrate the applications of our proposed method.
	\subsection{Genetic dependencies in cancer cell lines}
	
	Identifying genes that are essential for the survival and proliferation of cancer cells is a key step in the development of target-based drugs. This process can be carried out by analyzing the genetic dependencies of in vitro cancer cell lines \cite{Lin2020}. However, models that are able to automatically predict gene dependencies are of a great importance to help scientists with this aim \cite{chuai_deepcrispr_2018, chiu_predicting_2021}. In a recent study~\cite{Dempster2020}, the authors systematically analyze the predictive power of a comprehensive set of multi-omics features across hundreds of CRISPR screens in cancer cell lines. They use random forest regressors to predict gene dependencies and to rank important features. Their results demonstrate that gene expression values are more predictive for cancer cell lines vulnerabilities than genomics features. However, they show that there are a few exceptions such as BRAF, KRAS, and NRAS whose gene dependencies are best predicted with their own mutation values. In this section, we examine the latter result using our proposed approach.
	
	\subsection{Dataset}
	We downloaded the data from DepMap portal\footnote{\url{https://depmap.org/}}. Gene dependency values and their probabilities are taken from the CRISPR\_gene\_effect and Achilles\_gene\_dependency files respectively in DepMap Public 21Q4. The corresponding mutation data are taken unaltered from the files listed below: \newline
	CCLE\_mutations\_bool\_hotspot, \newline CCLE\_mutations\_bool\_damaging, \newline CCLE\_mutations\_bool\_nonconserving. \newline
	Gene expressions and copy numbers also are taken from the CCLE\_expression and CCLE\_gene\_cn files respectively.
	This all amounted to 894 cell lines.
	
	\subsection{Method}
	
	We selected gene dependency probabilities of `HRAS (3265)', `KRAS (3845)', `BRAF (673)', and `NRAS (4893)' and  used a threshold of $0.5$ to create the binary labels. Therefore, we have the propositional variables $V_S$ and $V_A$ as follows.
	\\ 
	\\
	$V_S =$ \{\text{`HRAS (3265)', `KRAS (3845)', `BRAF (673)', `NRAS (4893)'}\}, 
	\begin{equation*}
		\begin{aligned}
			V_A =  & \ \{ \text{`HRAS (3265)\_Hot', `KRAS (3845)\_Hot', `BRAF (673)\_Hot', }\\
			& \text{ `NRAS (4893)\_Hot', `KRAS (3845)\_Dam', `BRAF (673)\_Dam', }\\
			& \text{`NRAS (4893)\_Dam', `HRAS (3265)\_NonCon', `KRAS (3845)\_NonCon',}\\
			& \text{`BRAF (673)\_NonCon', `NRAS (4893)\_NonCon'\}},
		\end{aligned}
	\end{equation*}
	\\
	\noindent where the suffixes \_Hot, \_Dam, and \_NonCon correspond to hotspot, damaging, and nonconserved mutations respectively. We used definition~\ref{def:filtrationII} to compute the filtration of the posets and obtained the associated barcode, shown in Figure~\ref{fig:barcode-gdp}.
	
	\begin{figure}[h]
		\centering
		\includegraphics[scale=.8]{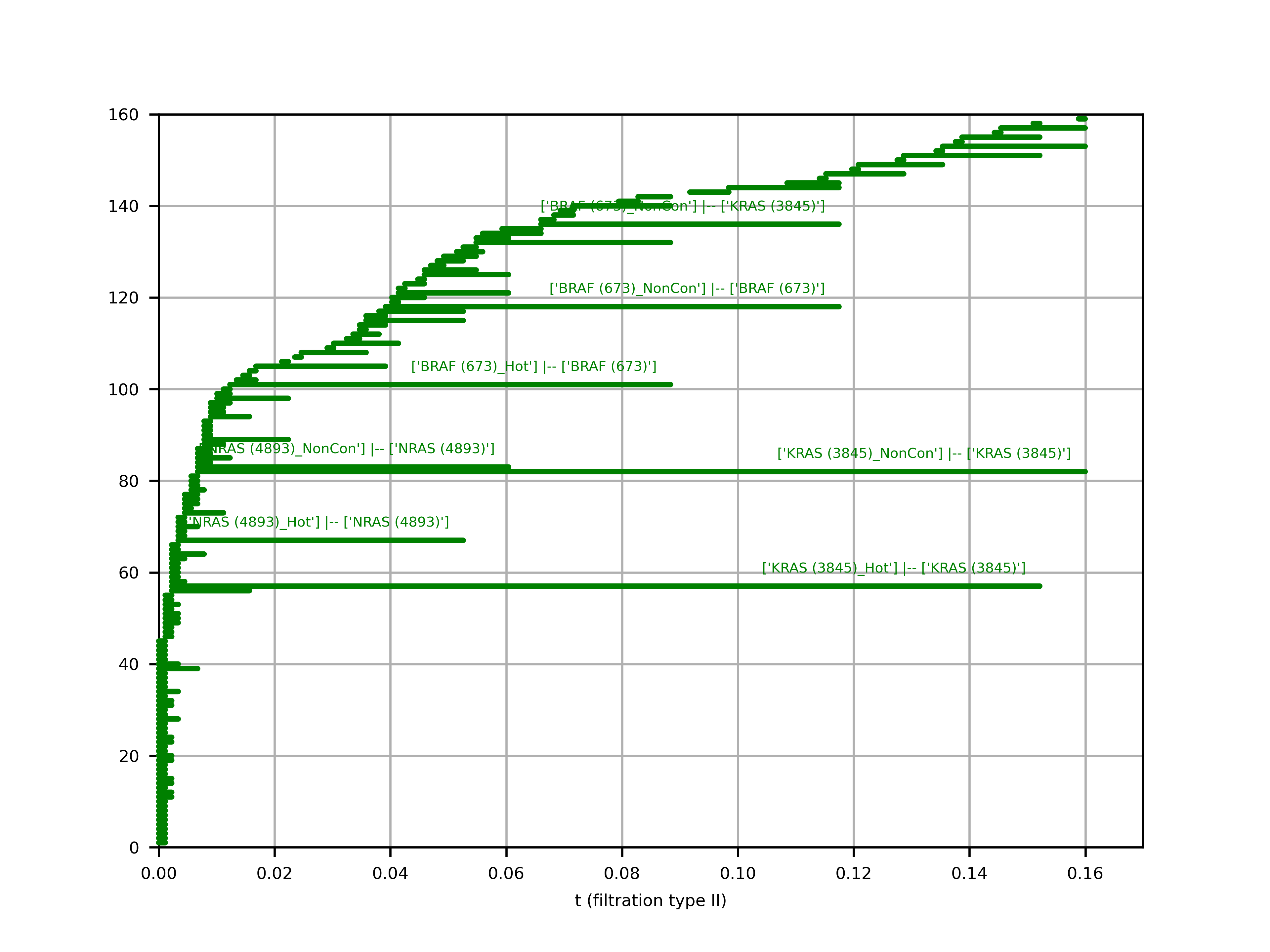}
		\caption{Barcode of the gene dependencies and their corresponding mutations}	
		\label{fig:barcode-gdp}
	\end{figure}
	
	To compare the results with those reported in \cite{Dempster2020}, we followed their pipeline to filter and train the regression models using expression values, copy numbers, and mutation data to predict the gene dependency values. We computed the Pearson correlation between the predicted values and the observed gene dependencies and obtained the top predictive features. Table~\ref{tab:model} reports the models' performance for the selected target genes\footnote{The reported results are similar to those published in \cite{Dempster2020}.}. 
	
	\begin{table}
		\captionsetup{justification=centering}
		\caption{\small Performance of the trained random forest regressors.}
		\label{tab:model} 
		\centering
		\begin{tabular}{p{2.7cm}P{2.5cm}}
			\hline\noalign{\smallskip}
			\textbf{Gene} & \textbf{Pearson correlation}\\
			\noalign{\smallskip}
			\Xhline{3\arrayrulewidth}
			\noalign{\smallskip}
			`HRAS (3265)'  & 0.48\\
			`KRAS (3845)'  & 0.81\\
			`BRAF (673)'  & 0.79\\
			`NRAS (4893)' & 0.78\\
			\noalign{\smallskip}\hline\noalign{\smallskip}
		\end{tabular}
	\end{table}

	\subsection{Results and Discussion} Given a set $\Delta$, the longest bar with the sequent $\Gamma \vdash \Delta$, compared to the shorter bars with sequents $\Gamma' \vdash \Delta$, implies that the features in $\Gamma$  have the most decisive role in estimating the gene dependencies of the genes in $\Delta$. In Figure~\ref{fig:barcode-gdp}, for $\Delta = \{\text{`KRAS (3845)'}\}$ we have two long bars corresponded to the sequents
	
	\begin{equation*}
		\begin{aligned}
			[\text{`KRAS (3845)\_NonCon'}]  & \ \vdash & [\text{`KRAS (3845)'}], \\
			[\text{`KRAS (3845)\_Hot'}]  & \ \vdash & [\text{`KRAS (3845)'}].
		\end{aligned}
	\end{equation*}
	
	This implies that among all the elements of $V_A$, the most important features for `KRAS (3845)' are `KRAS (3845)\_Hot' and `KRAS (3845)\_NonCon'. Similarly, sequents corresponded to the top two longest bars for $\Delta = \{\text{`NRAS (4893)'}\}$ and those for $\Delta = \{\text{`BRAF (673)'}\}$ are listed below.
	
	\begin{equation*}
		\begin{aligned}
			[\text{`NRAS (4893)\_NonCon'}]  & \ \vdash & [\text{`NRAS (4893)'}], \\
			[\text{`NRAS (4893)\_Hot'}]  & \ \vdash & [\text{`NRAS (4893)'}].
		\end{aligned}
	\end{equation*}
	
	\begin{equation*}
		\begin{aligned}
			[\text{`BRAF (673)\_NonCon'}]  & \ \vdash & [\text{`BRAF (673)'}], \\
			[\text{`BRAF (673)\_Hot'}]  & \ \vdash & [\text{`BRAF (673)'}].
		\end{aligned}
	\end{equation*}
	
	If $\Delta = \{\text{`HRAS (3265)'}\}$, there is not a significant long bar in Figure~\ref{fig:barcode-gdp}. However, if we restrict the bars to those with $\Delta = \{\text{`HRAS (3265)'}\}$, among the longest bars, we have the sequents listed below. 
	
	\begin{equation*}
		\begin{aligned}
			[\text{`HRAS (3265)\_NonCon'}]  & \ \vdash & [\text{`HRAS (3265)'}], \\
			[\text{`HRAS (3265)\_Hot'}]  & \ \vdash & [\text{`HRAS (3265)'}],
		\end{aligned}
	\end{equation*}
	
	Comparing Figure~\ref{fig:barcode-gdp} and Table~\ref{tab:model} suggests that the length of a bar corresponded to the sequent $\Gamma \vdash \Delta$ could be a major factor, if not the only one, in estimating the predictive power of the features in $\Gamma$. In this example, `HRAS (3265)\_NonCon' for `HRAS (3265)' is not as predictive as `KRAS (3845)\_NonCon' for `KRAS (3845)'.

	Note that the important features mentioned above are consistent with the top features obtained from the trained random forest regressors \cite{Dempster2020}.

	\bibliographystyle{amsplain}
	\bibliography{reference}

\appendix
\section{Simplicial complexes, filtrations, persistent homology and persistent barcodes}
\label{sec:persistent}

In this section we give some background on persistent homology for the reader's benefit.

\subsection{Background on homology theory}
Homology theory is a classical tool in mathematics for studying topological spaces. A particular homology theory (such as simplicial, singular, Čech, sheaf etc. \cite{Spanier}) 
associates to 
any topological space $X$, a certain vector space (assuming we take
coefficients in a field) graded by dimension, denoted 
\[
\HH(X) = \bigoplus_{p \geq 0}  \HH_p(X)
\]
in a functorial way
(maps of spaces give rise to linear maps of the corresponding vector spaces, and compositions are preserved). 
The dimension of $\HH_p(X)$ (called the $p$-th Betti number of $X$) has a geometric meaning. It is equal to the number of independent $p$-dimensional  ``holes'' of $X$. In particular, the dimension of $\HH_0(X)$ is the number of connected components of $X$, and if $X$ is a graph embedded in $\R^d$, then
the dimension of $\HH_1(X)$ is the number of ``independent cycles'' of $X$.
The dimension of $\HH_1(X)$ is equal to $0$ if $X$ is the $2$-dimensional sphere, while it is equal to $2$ is $X$ is the torus, reflecting the fact that
the sphere does not have $1$-dimensional holes while the torus has two 
independent ones. Finally, the dimension of $\HH_2(X)$ equals $1$ for either the sphere or the torus indicating that they both have a $2$-dimensional hole. 


\subsection{Simplicial homology}
The homology theory that is relevant for this paper is \emph{simplicial homology theory}  which associates to any simplicial complex $K$
(defined below), its \emph{simplicial homology groups}, $\HH_*(K)$. 
It is useful to
think of simplicial complexes as higher dimensional generalizations of graphs (with no multiple edges or self loops). A $p$-dimensional simplex is spanned
by $(p+1)$-vertices of the complex (just as an edge in a graph is spanned by two vertices). An edge in a graph is just a $1$-simplex.

\subsection{Simplicial complexes}
\begin{definition}
\label{def:simplicial-complex}
A \emph{finite simplicial complex} $K$ is a set of ordered subsets of $[N] = \{0,\ldots N\}$ for some $N \geq 0$, such that if $\sigma \in K$ and $\tau$
is a subset of $\sigma$, then $\tau \in K$. 
\end{definition}

\begin{notation}
\label{def:p-simplices}
If $\sigma = \{i_0,\ldots,i_p\} \in K$, 
with $K$ a finite simplicial complex,
and $i_0 < \cdots < i_p $, we will denote $\sigma = [i_0,\ldots,i_p]$ and call $\sigma$ a \emph{$p$-dimensional simplex of $K$}. We will denote by $K^{(p)}$ the set of $p$-dimensional simplices of $K$.
\end{notation}

A simplicial complex $K$ is a combinatorial object 
but it comes with 
an associated topological space $|K|$ (the so called geometric realization of $K$), and the simplicial homology groups, $\HH_*(K)$, are actually  topological invariants of $|K|$ (two simplicial complexes having homeomorphic geometric realizations have isomorphic simplicial homology groups). The geometric realization functor $|\cdot|$ will play no role in the paper but is useful to keep in mind for visualization purposes (and also to connect simplicial homology of a simplicial complex to the notion of homology being a functor from the category of topological spaces to vector spaces as stated in the previous paragraphs).   

What is simplicial homology ?

Let $K$ denote a finite simplicial complex and for $p \geq 0$,   let $K^{(p)}$ denote the set of $p$-dimensional simplices of $K$.

\begin{definition}[Chain groups and their standard bases]
\label{def:chain-groups}
Suppose $K$ is a finite simplicial complex.
For $p \geq 0$,  we will denote by 
$C_p(K) = C_p(K,\R)$ (the $p$-th chain group),   
the $\R$-vector space generated by the elements of $K^{(p)}$,
i.e.
\[
C_p(K) = \bigoplus_{\sigma \in K^{(p)}}\Q \cdot \sigma.
\]
\end{definition}

\begin{definition}[The boundary map]
\label{def:boundary-map}
We denote by $\partial_p(K): C_p(K) \rightarrow C_{p-1}(K)$ the 
linear map (called the $p$-th \emph{boundary map})  defined as follows. Since 
$\left(\sigma \right)_{\sigma \in K^{(p)}}$ is a basis of $C_p(K)$ it is enough to define the image of each $\sigma \in C_p(K)$. 
We define for $\sigma = [i_0,\ldots,i_p] \in K^{(p)}$,
\[
\partial_p(K)(\sigma) = \sum_{0\leq j \leq p} (-)^j [i_0,\ldots, \widehat{i_j}, \ldots,i_p] \in C_{p-1}(K),
\]
where $\widehat{\cdot}$ denotes omission.
\end{definition}

One can easily check that the boundary maps $\partial_p$ satisfy the key property that 
\[
\partial_{p+1}(K) \;\circ\; \partial_{p}(K) = 0,
\]
or equivalently
that 
\[
\mathrm{Im}(\partial_{p+1}(K)) \subset \ker(\partial_p(K)).
\]

\begin{notation}[Cycles and boundaries]
We denote 
\[
Z_p(K) = \ker(\partial_p(K)),
\]
(the space of \emph{$p$-dimensional cycles})
and
\[
B_p(K) = \mathrm{Im}(\partial_{p+1}(K))
\]
(the space of \emph{$p$-dimensional boundaries}).
\end{notation}

\begin{definition}[Simplicial homology groups]
\label{def:simplicial-homology}
The \emph{$p$-dimensional simplicial homology group $\HH_p(K)$} is defined as \[
\HH_p(K) = Z_p(K)/B_p(K).
\]
\end{definition}

\hide{
\subsection{Background on persistent homology}
We begin with some motivation behind the introduction of persistent homology.
One way that simplicial complexes arise in topological data analysis is via the so called Čech (or its closely related cousin, the Vietoris-Rips) complex \cite[pp. 60-61]{Edelsbrunner-Harer} 
associated to a point set. Let $X$ be a (finite) subset of some metric space. For concreteness let us assume $X=\R^d$ (with its Euclidean metric).  In practice, $X$ may consist of a finite set of points  (often  called ``point-cloud data'')
which approximates some subspace or sub-manifold $M$ of $\R^d$. The topology (in particular, the homology groups) of the manifold $M$ is not reflected in the set
of points $X$ (which is a discrete topological space under the subspace topology induced from that of $\R^d$). 
Now for $r \geq 0$, let $X_r$ denote the union of closed Euclidean balls,
$B(x,r)$, of radius $r$ centered at the points $x \in X$. In particular, $X_0 = X$.
Also, for $0 \leq r \leq r'$, we have that $X_r \subset X_{r'}$. Thus,
$(X_r)_{r \in \R_{\geq 0}}$ is an increasing family of topological spaces
indexed by $r \geq 0$. This is an example of a (continuous) filtration of topological spaces. 
For each $r \geq 0$, we can associate a finite simplicial complex $K_r$ --
the so called \emph{nerve complex} of the tuple of balls $(B(x,r))_{x \in X}$.
Informally, the simplicial complex $K_r$ has vertices indexed by the set
$X$, and for each subset $X'$ of $X$ of cardinality $p+1$, we include the 
$p$-dimensional simplex spanned by the vertex set corresponding to $X'$ if and only if 
\[
\bigcap_{x \in X'} B(x,r) \neq \emptyset.
\]
It is a basic result in 
algebraic topology (the so called ``nerve lemma'') that 
the simplicial homology groups, $\HH_*(K_r)$, are isomorphic to 
the  (say singular) homology groups, $\HH_*(X_r)$, of $X$. 
More precisely, the nerve lemma states that the geometric realization
$|K_r|$ is homotopy equivalent to (in fact, is a deformation retract of)  $X_r$ (homotopy equivalent spaces have isomorphic homology groups).
Observe that for $r \leq r'$, 
$K_r$ is a sub-simplicial complex of $K_{r'}$, and since there are only 
finitely many simplicial complexes on $\card(X)$-many vertices, there
are finitely many distinct simplicial complexes in the tuple $(K_r)_{r \geq 0}$. Thus, we obtain 
a finite  nested sequence $\mathcal{F}$ of simplicial complexes, 
$K_0 \subset K_{r_1} \subset K_{r_2} \subset \cdots \subset K_{r_n}$
in which each complex is a subcomplex of the next.
We will refer to $\mathcal{F}$ as a finite \emph{filtration} of 
simplicial complexes.

Let us return to the picture of the point-cloud $X$ approximating an underlying manifold $M$. The homology of the manifold is captured (by virtue of the nerve lemma) by the simplicial homology groups of the various simplicial complexes occurring in the finite filtration $\mathcal{F}$.
However, this correspondence is not bijective. As one can easily visualize, as $r$ starts growing from $0$, there are many spurious homology classes that are born and quickly die off (i.e. the corresponding holes are filled in) and these have nothing to do with the topology of $M$. Persistent homology is a tool that can be used to separate this ``noise'' from the  bona fide homology classes of $M$. The persistent homology of the filtration $\mathcal{F}$ is encoded as a set of intervals (so called bars in the barcode of the filtration $\mathcal{F}$.
Intervals (bars) of short length corresponds to noise, while the ones which
are long (persistent) reflect the homology of the underlying manifold $M$.
The barcode of the filtration associated to $X$ can be used a feature of $X$ for learning or comparison purposes. In particular, the barcodes of two 
finite sets $X, X'$ which are ``close'' as finite metric spaces, are themselves close under appropriately defined notion of distance between barcodes. Such results (called stability theorems)  form the theoretical basis of practical applications of persistent homology.

We refer the reader to the excellent surveys  \cite{Weinberger_survey, Edelsbrunner_survey, Ghrist}, and also the books \cite{Edelsbrunner-Harer,Dey-Wang2021},
for background on persistent homology.
}

\hide{
even though in this paper 
we do not assume knowledge of the theory of persistent homology as a prerequisite.
We give a self-contained introduction to simplicial homology 
assuming only very basic knowledge of linear algebra.
}

\subsection{Persistent homology}	
We now define persistent homology (taken from \cite{basu-karisani-persistent}).

	Let $T$ be an ordered set, and
	$\mathcal{F} = (X_t)_{t \in T}$, a tuple of subspaces of $X$, such that
	$s \leq t \Rightarrow X_s \subset X_t$. We call $\mathcal{F}$ a filtration
	of the  topological space $X$.
	
	We now recall the definition of the \emph{persistent homology
		groups}  associated to a filtration
	\cite{Edelsbrunner_survey, Weinberger_survey}.
	Since we only consider homology groups with rational coefficients, all homology groups in what follows are finite dimensional $\Q$-vector spaces.
	
	\begin{notation}
		\label{not:inclusion}
		For $s,t \in T, s \leq t$, and $p \geq 0$, we let $i_p^{s, t} : \HH_p (X_s) \longrightarrow
		\HH_p (X_t)$, denote the homomorphism induced by the inclusion $X_s
		\hookrightarrow X_t$.
	\end{notation}
	
	
	\begin{definition}
		\label{def:persistent}
		{\cite{Edelsbrunner_survey}} For each triple $(p, s, t)  \in \Z_{\geq 0} \times T \times T$ with 
		$
		s \leq t 
		$ 
		the   
		\emph{persistent homology  group}, $\HH_p^{s, t} (\mathcal{F})$ is defined by
		\begin{eqnarray*}
			\HH_p^{s, t} (\mathcal{F}) & = &  \mathrm{Im} (i_p^{s, t}).
		\end{eqnarray*}
		Note that $\HH_p^{s, t} (\mathcal{F}) \subset \HH_p (X_t)$, and $\HH_p^{s, s}
		(\mathcal{F}) = H_p (X_s)$.
	\end{definition}
	
	\begin{notation}
		We denote by $b_p^{s, t} (\mathcal{F}) = \dim_\Q (\HH_p^{s, t}
		(\mathcal{F}))$.
	\end{notation}

	Persistent homology measures how long a homology class persists in the filtration, in other words considering the homology classes as topological features, it gives an insight about the time  (thinking of the indexing set $T$ of the filtration as time) that a topological feature appears (or is born) and the time it disappears (or dies). This is made precise  as follows.  
	\begin{definition}
		For $s \leq t \in T$, and $ p \geq 0$,
		\begin{itemize}
			\item  we say that a homology class $\gamma \in \HH_p(X_{s})$ is \emph{born} at time $s$, if $\gamma \notin \HH_{p}^{s',s}(\mathcal{F})$, for any $s' < s$;
			\item for a class $\gamma \in \HH_p(S_{s})$ born at time $s$, we say that $\gamma$ \emph{dies} at time $t$,
			\begin{itemize}
				\item if  $i_p^{s,t'}(\gamma) \notin \HH_{p}^{s',t'}(\mathcal{F})$ for all $s', t'$ such that $s' < s \leq t' < t$,
				\item but $i_p^{s,t}(\gamma) \in \HH_{p}^{s'',t}(\mathcal{F})$, for some $s'' < s$.
			\end{itemize} 
		\end{itemize}  	
	\end{definition}
	
	\begin{remark}
		\label{rem:subspaces}
		Note that the homology 
		classes that are born at time $s$, and those that are born at time $s$ and dies at time $t$, 
		as defined above are not subspaces of $\HH_p(X_s)$. In order to be able to associate a ``multiplicity''
		to the set of homology classes which are born at time $s$ and dies at time $t$ we interpret them as classes in 
		certain \emph{subquotients}  of $\HH_*(X_s)$ in what follows.
	\end{remark}
	
	First observe that it follows from Definition~\ref{def:persistent} that for all $s' \leq s \leq t$ and $p \geq 0$, 
	$\HH_p^{s',t}(\mathcal{F})$ is a subspace of $\HH^{s,t}_p(\mathcal{F})$, and both are subspaces of
	$\HH_p(X_t)$. This is because the homomorphism $i^{s',t}_p = i^{s,t}_p\circ i^{s',s}_p$, and so the image of 
	$i^{s',t}_p$ is contained in the image of $i^{s,t}_p$.
	It follows that, for $s \leq t$,  the union of $\bigcup_{s' < s} \HH^{s',t}_p(\mathcal{F})$ is an increasing union of subspaces, and is 
	itself a subspace of $\HH_p(X_t)$.
	In particular, setting $t=s$,  $\bigcup_{s' < s} \HH^{s',s}(\mathcal{F})$ is a subspace of $\HH_p(X_s)$. \\
	
	With the same notation as  above:
	
	\begin{definition}[Subspaces of $\HH_p(X_s)$]
		\label{def:barcode_subspace}
		For $s \leq t$, and $p \geq 0$, we define
		\begin{eqnarray*}
			M^{s,t}_p(\mathcal{F}) &=& \bigcup_{s' < s} (i^{s,t}_p)^{-1}(\HH^{s',t}_p(\mathcal{F})), \\
			N^{s,t}_p(\mathcal{F}) &=& \bigcup_{s' < s\leq t' < t} (i^{s,t'}_p)^{-1}(\HH^{s',t'}_p(\mathcal{F})), \\
		\end{eqnarray*}
	\end{definition}

	\begin{remark}
		\label{rem:barcode_subspace_mean}
		The ``meaning''  of these subspaces are as follows.
		\begin{enumerate}[(a)] \label{rem:meaning} 
			\item
			For every fixed $s \in T$,  $M^{s,t}_p(\mathcal{F})$  is  a subspace of  $\HH_p(K_s)$ consisting of homology classes in $\HH_p(K_s)$ which are
			\begin{center}
				``born before time $s$, or born at time $s$ and dies at $t$ or earlier'' 
			\end{center}
			
			\item
			Similarly,
			for every fixed $s \in T$,  $N^{s,t}_p(\mathcal{F})$  is  a subspace of $\HH_p(K_s)$ consisting of homology classes in  $\HH_p(K_s)$ which are
			\begin{center}
				``born before time $s$, or born at time $s$ and dies strictly earlier than $t$''
			\end{center}
		\end{enumerate}
		
	\end{remark}
	
	We now define certain \emph{subquotients} of the homology groups of $\HH_p(K_s), s \in T, p \geq 0$,
	in terms of the subspaces defined above in Definition~\ref{def:barcode_subspace}. 
	
	\begin{definition}[Subquotients associated to a filtration]
		\label{def:barcode_subquotients}
		For $s \leq t$, and $p \geq 0$, we define
		\begin{eqnarray*}
			P^{s,t}_p(\mathcal{F}) &=& M^{s,t}_p(\mathcal{F})/N^{s,t}_p(\mathcal{F}), \\
			P^{s,\infty}_p(\mathcal{F}) &=&  \HH_p(K_s) / \bigcup_{s \leq t} M^{s,t}_p(\mathcal{F}).
		\end{eqnarray*}
		
		We will call 
		\begin{enumerate}[(a)]
			\item
			$P^{s,t}_p(\mathcal{F})$ the \emph{space of $p$-dimensional cycles  born at time $s$ and which dies at time $t$};
			and 
			\item
			$P^{s,\infty}_p(\mathcal{F})$ the \emph{space of $p$-dimensional cycles born at time $s$ and which never die}.
		\end{enumerate}
	\end{definition}

	Finally, we define:
	
	\begin{definition}[Persistent multiplicity, barcode]
		\label{def:barcode_multiplicity}
		We will denote for $s \in T,  t \in T \cup \{\infty \}$,
		\begin{equation}
			\label{eqn:def:barcode:multiplicity}
			\mu^{s,t}_p(\mathcal{F}) = \dim P^{s,t}_p(\mathcal{F}),
		\end{equation}
		and call $\mu^{s,t}_p(\mathcal{F})$ the \emph{persistent multiplicity of $p$-dimensional cycles born at time $s$ and dying at time $t$ if $t \neq \infty$,   or never dying in case $t = \infty$}.

		We will call the set
		\begin{equation}
			\label{eqn:def:barcode}
			\mathcal{B}_p(\mathcal{F}) = \{(s,t,\mu^{s,t}_p(\mathcal{F})) \mid \mu^{s,t}_p(\mathcal{F}) > 0\}
		\end{equation}
		\emph{the $p$-dimensional barcode associated to the filtration $\mathcal{F}$}. 
		
		We will call an element
		$b = (s,t,\mu^{s,t}_p(\mathcal{F})) \in \mathcal{B}_p(\mathcal{F})$ a \emph{bar of $\mathcal{F}$ of multiplicity
			$\mu^{s,t}_p(\mathcal{F}$}). 
	\end{definition}

	
We now give a concrete example of a barcode associated to a 
(infinite) filtration (the following example is taken from \cite{Karisani2022}).

\begin{example}
	\label{eg:torus}
	Let $S$ be the two-dimensional torus 
	(topologically $\Sphere^1 \times \Sphere ^1$)
	embedded in $\mathbb{R}^3$, and $\mathcal{F}$ be
	the filtration of the torus by the sub-level sets of the height function  (depicted in Figure~\ref{Fig:tor}). 
	We denote by $S_{\leq t}$ the subset of the torus having ``height'' $\leq t$.
	
	We consider homology in dimensions  $0$, $1$ and $2$. 
	
	Informally, one observes that a $0$-dimensional homology class is born at time $t_0$ which never dies. There are two $1$-dimensional homology classes, the horizontal loop born at time $t_2$ and the vertical loop born at time $t_4$, which also never die. Lastly, there is a $2$-dimensional homology class born at time $t_5$ which never dies. Since there are no homology classes of the same dimension being born and dying at the same time, multiplicities in all the cases are 1. 
	
	More formally, following Definitions~\ref{def:barcode_subspace}, \ref{def:barcode_subquotients} and \ref{def:barcode_multiplicity}, we obtain:
	
	\begin{enumerate}[({Case p = }1)]
		\setcounter{enumi}{-1}
		\item If $t_0 \leq t < \infty$ then
		(using Definition~\ref{def:barcode_subspace}) 
		\[
		M^{t_0,t}_0(\mathcal{F}) = 0,
		\]
		and  hence
		(using Definitions~\ref{def:barcode_subquotients} and \ref{def:barcode_multiplicity})
		\[
		P^{t_0,t}_0(\mathcal{F}) = 0, \mbox{ and }
		\mu^{t_0, t}_0(\mathcal{F})=0.
		\]
		On the other hand,  
		\[
		P^{t_0,\infty}_0(\mathcal{F}) = \HH_0(S_{\leq t_0}),
		\]
		implying 
		\[
		\mu^{t_0, \infty}_0(\mathcal{F})=1.
		\]
		
		\item  For $t_2 \leq t < \infty$, 
		\[
		M^{t_2,t}_1(\mathcal{F}) = 0,
		\]
		and hence  
		\[P^{t_2,t}_1(\mathcal{F}) = 0, \mbox{ and }
		\mu^{t_2, t}_1(\mathcal{F})=0.
		\]
		Moreover,  
		\[
		P^{t_2,\infty}_1(\mathcal{F}) = \HH_1(S_{\leq t_2}),
		\]
		and therefore, 
		\[
		\mu^{t_2, \infty}_1(\mathcal{F})=1.
		\]
		
		\noindent For $t_4 \leq t < \infty$, 
		\[
		M^{t_4,t}_1(\mathcal{F}) = N^{t_4,t}_1(\mathcal{F}) =  \HH_1(S_{< t_4}),
		\]
		and hence 
		\[
		P^{t_4,t}_1(\mathcal{F}) = 0, \mbox{ and }
		\mu^{t_4, t}_1(\mathcal{F})=0.
		\]
		Moreover,  
		\[
		P^{t_4,\infty}_1(\mathcal{F}) =  \HH_1(S_{\leq t_4}) /  \HH_1(S_{< t_4}),
		\]
		and therefore  
		\[
		\mu^{t_4, \infty}_1(\mathcal{F})=1.
		\]
		
		\item For $t_5 \leq t < \infty$, 
		\[
		M^{t_5,t}_2(\mathcal{F}) = 0,
		\]
		and hence 
		\[
		P^{t_5,t}_2(\mathcal{F}) = 0, \mbox{ and }
		\mu^{t_5, t}_2(\mathcal{F})=0.
		\]
		Moreover, 
		\[
		P^{t_5,\infty}_2(\mathcal{F}) =\HH_2(S),
		\]
		and therefore 
		\[
		\mu^{t_5, \infty}_2(\mathcal{F})=1.
		\]
	\end{enumerate}
	Therefore the barcodes are as follows 
	(using Eqn.~\eqref{eqn:def:barcode}).
	
	$$ {\small
		\begin{array}{ccl}
			\mathcal{B}_0(\mathcal{F}) & = & \{(t_0, +\infty, 1)\}, \\
			\mathcal{B}_1(\mathcal{F}) & = & \{(t_2, +\infty, 1), (t_4, +\infty, 1)\},\\
			\mathcal{B}_2(\mathcal{F}) & = &\{(t_5, +\infty, 1)\}.
	\end{array}}
	$$ 
	Figure~\ref{Fig:bar} illustrates the corresponding bars.  Notice that even though the filtration $\mathcal{F}$ is an infinite filtration indexed by $\mathbb{R}$, the barcodes, $\mathcal{B}_p(\mathcal{F})$, are finite.
	
	\begin{figure}[h!]
		\centering
		\subfigure[]{%
			\label{Fig:tor}%
			\includegraphics[height=1.12in]{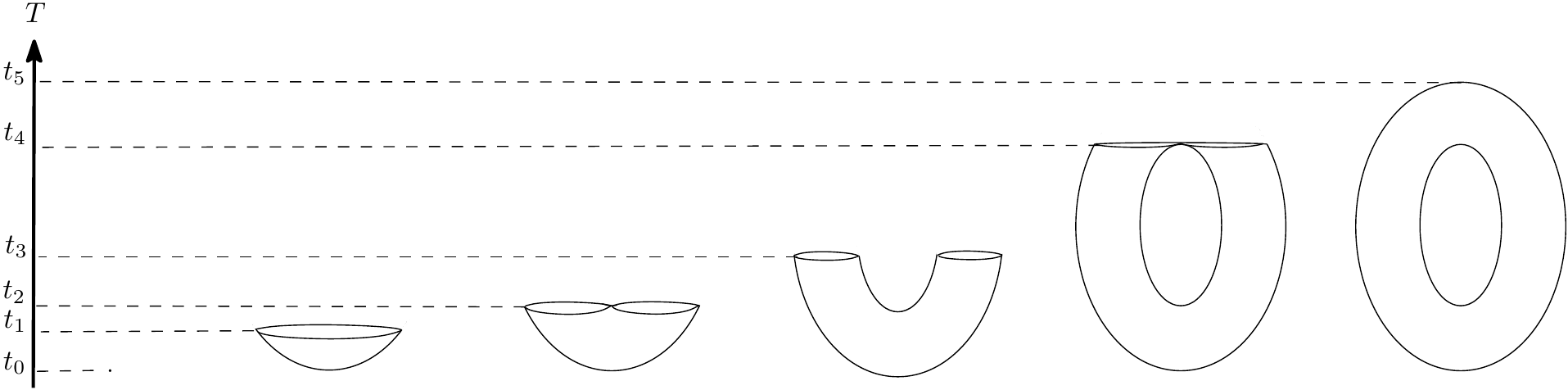}}%
		\qquad
		\subfigure[]{%
			\label{Fig:bar}%
			\includegraphics[height=1in]{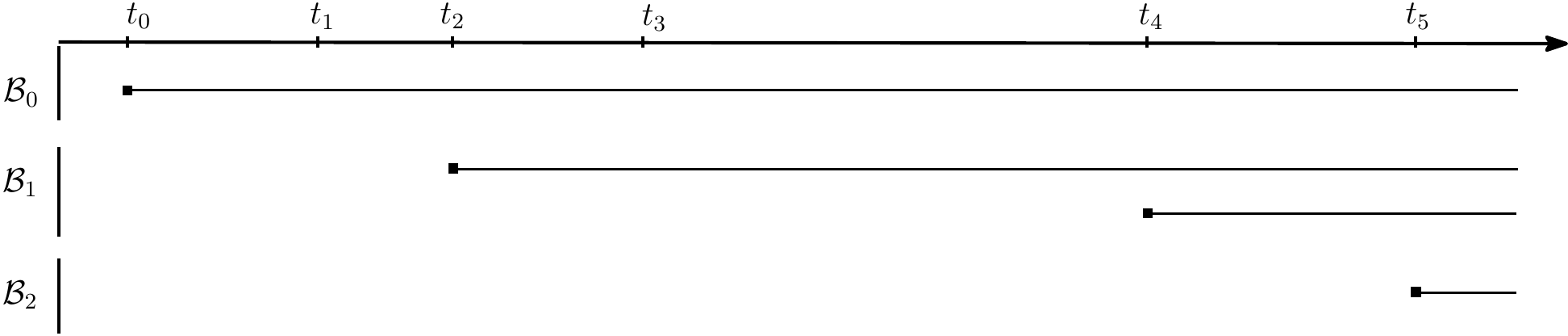}}%
		
		\caption{\small (a) Torus filtered by the sub-level sets of the height function, (b) corresponding barcodes for homology classes of dimension 0, 1 and 2. }
	\end{figure}
\end{example}
\end{document}